\theoremstyle{plain}
\newtheorem{thm}{Theorem}
\newtheorem{lem}[thm]{Lemma}
\newtheorem{prop}[thm]{Proposition}
\newtheorem{cor}[thm]{Corollary}
\theoremstyle{definition}
\newtheorem{defn}[thm]{Definition}
\newtheorem{exmp}[thm]{Example}
\newtheorem{rem}[thm]{Remark}
\DeclareMathOperator{\id}{id}
\newcommand{\N}{\mathbb{N}}
\begin{document}

\title[Simplicity of Ore monoid rings]{Simplicity of Ore monoid rings}

\author{Patrik Nystedt}
\address{Department of Engineering Science,
University West,
SE-46186 Trollh\"{a}ttan, Sweden}

\author{Johan \"{O}inert}
\thanks{Corresponding author: Johan \"{O}inert}
\address{Department of Mathematics and Natural Sciences,
Blekinge Institute of Technology,
SE-37179 Karlskrona, Sweden}

\author{Johan Richter}
\address{Academy of Education, Culture and Communication,
M\"{a}lardalen University,
Box 883, SE-72123 V\"{a}ster\aa s, Sweden}

\email{patrik.nystedt@hv.se; johan.oinert@bth.se; johan.richter@mdh.se}

\subjclass[2010]{17D99, 17A36, 17A99}
\keywords{non-associative Ore extension, iterated Ore extension, generalized monoid ring, Ore monoid ring, differential monoid ring, simple ring, outer derivation.}

\begin{abstract}
Given a non-associative unital ring $R$, a 
monoid $G$ and a set $\pi$ of additive maps $R \rightarrow R$,
we introduce the Ore monoid ring $R[\pi ; G]$,
and, in a special case, the differential monoid ring.
We show that these structures generalize, in a natural way,
not only the classical Ore extensions and differential polynomial 
rings, but also the constructions, introduced by Cojuhari,
defined by so-called $D$-structures $\pi$.
Moreover, for commutative monoids, we give necessary and sufficient conditions
for differential monoid rings to be simple.
We use this in a special case to 
obtain new and shorter proofs of classical 
simplicity results for differential polynomial rings in several variables
previously obtained by Voskoglou and Malm by other means.
We also give examples of new Ore-like structures
defined by finite commutative monoids. 
\end{abstract}

\maketitle

\pagestyle{headings}


\section{Introduction}\label{sec:one}

\subsection{Background}

Suppose that $R$ is an associative unital ring
with multiplicative identity 1.
In \cite{ore1933} Ore introduced a version
of skew polynomial rings $S = R[x ; \sigma , \delta]$,
nowadays called \emph{Ore extensions}, that have
become a very important construction in ring theory.
Recall that $S$ is
defined to be the polynomial ring $R[x]$ as a left $R$-module,
equipped with a new associative multiplication induced by the relations
$x r = \sigma(r) x + \delta(r)$, for $r \in R$,
where $\sigma : R \to R$ is a ring endomorphism respecting 1
and $\delta : R \to R$ is a $\sigma$-derivation on $R$, 
that is $\delta$ is an additive map
satisfying $\delta(rs)=\sigma(r)\delta(s)+\delta(r)s$ for all $r,s \in R$.
In the special case when $\sigma = {\rm id}_R$, then
$S$ is called a \emph{differential polynomial ring} and $\delta$
is called a \emph{derivation}.
Ore extensions
play an important role when
investigating cyclic algebras, enveloping rings of solvable Lie algebras,
and various types of graded rings 
such as group rings and crossed products 
(see e.g. \cite{cohn1977,jacobson1999,mcconell1988,rowen1988}).
They are also a natural source of
examples and counter-examples in ring theory 
(see e.g. \cite{bergman1964,cohn1961}).
Furthermore, special cases
of Ore extensions are used as
tools in different analytical settings,
such as differential-, pseudo-differential and
fractional differential operator rings
\cite{goodearl1983} and 
$q$-Heisenberg algebras \cite{hellstromsilvestrov2000}.

Many different properties of Ore extensions, such as
when they are integral domains, principal domains,
prime or noetherian have been studied by numerous authors
(see e.g. \cite{cozzens1975,mcconell1988} for surveys, or the articles
\cite{cauchon1979,lamleroy2000,LLM1997,leroymatczuk1991,leroymatczuk1992}).
For a moment, let us focus on the
simplicity of 
a differential polynomial ring $S = R[x ; \id_R , \delta]$.
Recall that $\delta$ is called {\it inner}
if there is $a \in R$ such that
$\delta(r) = ar - ra$, for $r \in R$.
In that case we write $\delta = \delta_a$.
If $\delta$ is not inner, then $\delta$ is called {\it outer}. 
We let the characteristic of a
ring $R$ be denoted by ${\rm char}(R)$. 
In an early article by Jacobson \cite{jacobson1937}
it was shown that if $\delta$ is outer and $R$ is a division ring
with ${\rm char}(R)=0$, then $S$ is simple.
The case of positive characteristic is more complicated 
and $S$ may contain non-trivial ideals.
In fact, Amitsur \cite{amitsur1950,amitsur1957} has shown that 
if $R$ is a simple ring with ${\rm char}(R)= p > 0$, 
then every non-zero ideal of $D$ is generated by a polynomial, all of whose
monomials have degrees which are multiples of $p$.
It should be noticed that simplicity of $R$ is not a necessary condition
for simplicity of $S = R[x ; \id_R , \delta]$.
Consider e.g. the well known example of the first Weyl algebra
where $R = K[y]$, $K$ is a field with ${\rm char}(K)=0$ and 
$\delta$ is the usual derivative on $R$
(for more details, see e.g. \cite[Example 1.6.32]{rowen1988}).
However, $\delta$-simplicity of $R$ is always a necessary
condition for simplicity of $S$ 
(see \cite[Lemma 4.1.3(i)]{jordan1975}).
Recall that an ideal $I$ of $R$ is called \emph{$\delta$-invariant}
if $\delta(I) \subseteq I$.
The ring $R$ is called \emph{$\delta$-simple} if $\{ 0 \}$ and $R$
are the only $\delta$-invariant ideals of $R$.
Jordan \cite{jordan1975} (and Cozzens and Faith \cite{cozzens1975}
in a special case) has generalized the results of Amitsur \cite{amitsur1957}
to the case of $\delta$-simple $R$.
In \cite{oinert2013} \"{O}inert, Richter and Silvestrov
have shown that $S = R[x ; \id_R , \delta]$ is simple 
if and only if $R$ is $\delta$-simple and $Z(S)$ is a field.
Simplicity results for differential polynomial rings in finitely many variables
have been obtained by Voskoglou \cite{voskoglou1985} and Malm \cite{malm1988}.
In \cite{nystedtoinertrichter} Nystedt, \"{O}inert and Richter introduced a variant
of Ore extensions $R[x ; \sigma , \delta]$ for non-associative 
(i.e. not necessarily associative) rings $R$
and weaker types of derivations. In loc. cit. simplicity results were obtained
for these more general types of differential polynomial rings $R[x ; {\rm id}_R , \delta ]$,
thereby generalizing the results of \cite{amitsur1957}, \cite{jordan1975} and \cite{oinert2013}.

Many different generalizations of Ore extensions have been introduced.
In \cite{smits1968} Smits considered skew polynomial rings $R[x]$
with a commutation rule defined by 
$x r = r_1 x + r_2 x^2 + \ldots + r_n x^n$,
where $n \geq 2$ is a fixed integer and $r \in R$.
In loc. cit. it was shown that the maps 
$\delta_i : r \mapsto r_i$, $i \in \{1,\ldots,n\}$, are additive.
In the special case when $\delta_i = 0$, for $i \in \{2,\ldots,n\}$,
one retrieves the skew polynomial ring $R[x ; \sigma , 0]$
where $\sigma = \delta_1$.
Generalizations in several variables have also been considered.
Let $\mathbb{N}$ denote the set of non-negative integers.
If $n \in \mathbb{N}$ and $\delta_0,\ldots,\delta_n$ is a set of 
commuting derivations on $R$, then one can define the 
differential operator ring $R[x_0,\ldots,x_n ; \delta_0 , \ldots , \delta_n]$
as the ordinary polynomial ring 
$R[x_0,\ldots,x_n]$ subject to the relations 
$x_i x_j = x_j x_i$ and $x_i r = r x_i + \delta_i(r)$,
for $i,j \in \{ 0,\ldots,n \}$ and $r \in R$ (see e.g. \cite{voskoglou1985}). 
In \cite{hasse1937} Hasse and Schmidt defined a higher derivation of rank $m \in \mathbb{N}$
as a sequence $d_0,d_1,\ldots,d_m$ of additive endomorphisms of $R$
satisfying the conditions $d_n(ab) = \sum_{i+j=n} d_i(a) d_j(b)$,
for $n \in \{ 0 , \ldots , m \}$, and $a,b \in R$.
If the $d_i$'s commute, then one can define a differential operator ring
$R[x_0,\ldots,x_m ; d_0 , \ldots d_m]$ as the ordinary polynomial ring 
$R[x_0,\ldots,x_m]$ subject to the relations 
$x_i x_j = x_j x_i$ and $x_i r = \sum_{k=0}^i d_k(r) x_{i-k}$,
for $i,j \in \{ 0,\ldots,m \}$ and $r \in R$ (see e.g. \cite{malm1988}).
In \cite{cojuhari2007,cojuhari2012,cojuhari2018} Cojuhari and Gardner
introduced skew versions $R[G;\pi]$ of monoid rings in the following sense.
Let $G$ denote a monoid with identity element $e$.
For all $a,b \in G$, let $\pi^a_b : R \rightarrow R$ be a function.
Following \cite[Definition 1.1]{cojuhari2012} we say that 
$\pi = \{ \pi^a_b \}_{a,b \in G}$ is a \emph{$D$-structure on $R$} if the following axioms hold:
\begin{itemize}

\item[(D0)] For all $a \in G$ and all $r \in R$, we have $\pi^a_b(r)=0$ 
for  all but finitely many $b \in G$.

\item[(D1)] $\pi^e_e = {\rm id}_R$ and for all
$a \in G \setminus \{ e \}$, we have that $\pi^e_a = 0$.

\item[(D2)] For all $a,b \in G$, $\pi^a_b(1)$ equals
the Kronecker delta function $\delta_{a,b}$.

\item[(D3)] For all $a,b \in G$, the map $\pi^a_b$ is additive.

\item[(D4)] For all $a,b,c \in G$, we have
$\pi^{ab}_c = \sum_{de = c} \pi^a_d \circ \pi^b_e$.

\item[(D5)] For all $a,b \in G$ and all $r,s \in R$, we have
$\pi^a_b(rs) = \sum_{c \in G} \pi^a_c(r) \pi^c_b(s)$.

\end{itemize}
The \emph{generalized monoid ring} $R[G;\pi]$ is defined to be the
set of formal sums $\sum_{a \in G} r_a x^a$,
where $r_a \in R$ is zero for all but finitely many $a \in G$.
The addition on $R[G;\pi]$ is defined pointwise and the 
multiplication on $R[G;\pi]$ is defined by the biadditive 
extension of the relations
\begin{equation}\label{eq:multirule}
(r x^a) (s x^b) = \sum_{c \in G} r \pi^a_c(s) x^{cb}
\end{equation}
for $r,s \in R$ and $a,b \in G$.
It is not difficult to verify that
$R[G;\pi]$ is an associative unital ring (see \cite[Section 4]{cojuhari2007}).
Notice that we recover the classical Ore extension $R[x ; \sigma , \delta]$ 
if we put $G=\N$ and let $\pi^a_b$ be the sum of all the
${a \choose b}$ possible compositions of $b$
copies of $\sigma$ and $a-b$ copies of $\delta$ in arbitrary order
(see \cite[Equation (11)]{ore1933} and \cite[Example 2]{cojuhari2007}).
We also retrieve the differential operator rings defined by a higher derivation
$d_0,d_1,d_2,\ldots,d_m$ which is iterative, that is such that
$d_a \circ d_b = {a+b \choose a} d_{a+b}$ for all $a$ and $b$,
if we put $\pi^a_b = (a! / b!) d_{a-b}$, if $a \geq b$, and
$\pi^a_b = 0$, otherwise (see \cite[Section 1]{cojuhari2012}).
Furthermore, crossed products, group rings and the construction
of Smits \cite{smits1968} mentioned above can be obtained from this
construction (see \cite[Section 6]{cojuhari2007}).

\subsection{Outline}

In this article, we consider Ore monoid rings $R[G;\pi]$ in a context which is 
more general than the one studied by Cojuhari \cite{cojuhari2007}. Indeed, we allow $R$ to be non-associative and do not necessarily require (D5) to hold.
Consider the following axioms:
\begin{itemize}

\item[(D6)] For all $a \in G$, the equality $\pi^a_a = {\rm id}_R$ holds.

\item[(D7)] For all $a,b \in G$, the map $\pi^a_b$ is left $R^G$-linear.

\item[(D8)] For all $a,b \in G$, the map $\pi^a_b$ is right $R^G$-linear.

\end{itemize}
Here $R^G$ is defined to be the set of $r \in R$ such that for all $a,b \in G$,
$\pi^a_b(r) = \delta_{a,b} r$.
We say that $\pi$ is a \emph{(unital) $G$-derivation on $R$} if (D0)$-$(D4) (and (D6)) hold.
In that case,
$R[G;\pi]$ is called an \emph{Ore monoid ring}
(\emph{differential monoid ring}).
The addition on $R[G;\pi]$ is defined in the same way as for the generalized monoid ring,
and the multiplication is defined by \eqref{eq:multirule}.
We say that $\pi$ is a \emph{strong (unital) $G$-derivation on $R$}
if also (D7) or (D8) holds.
Moreover, we say that $\pi$ is a \emph{(unital) $D$-structure on $R$} if (D0)$-$(D5) (and (D6)) hold. 
In that case,
$R[G;\pi]$ is called a \emph{classical
Ore monoid ring} (\emph{classical differential monoid ring}).
It follows from the examples in \cite{nystedtoinertrichter} (for the case $G = \mathbb{N}$) 
that the following inclusions hold:
$$\{ \mbox{classical Ore monoid rings} \} \subsetneq 
\{ \mbox{Ore monoid rings} \}$$
$$\{ \mbox{classical differential monoid rings} \} \subsetneq 
\{ \mbox{differential monoid rings} \}$$
The main objective of this article is to extend the 
simplicity results in \cite{nystedtoinertrichter}
to Ore monoid rings. The secondary objective is
to apply our extended results to particular cases of monoids to obtain 
new proofs of classical simplicity results for iterated Ore extensions
as well as showing simplicity results for new Ore-like structures.

In Section \ref{Sec:Prel},
we show some results concerning commutativity and
associativity in Ore monoid rings 
(see Propositions~\ref{fixedring}--\ref{parenthesis}).
At the end of the section,
we describe certain elements
in the center of Ore monoid rings,
in the case when $G$ is commutative
(see Corollary~\ref{corcenter}).

In Section~\ref{Sec:Simplicity}, we show the main result
of this article (see Theorem~\ref{newmaintheorem}) which gives
us necessary and sufficient conditions for simplicity
of strong differential monoid rings 
$R[G;\pi]$ under the hypothesis that $G$ is commutative and that there is a well-order $\preceq$
on $G$ with the property that for all $a,b \in G$ with 
$b \npreceq a$, the equality $\pi_b^a = 0$ holds.

In Section~\ref{Sec:Malm+Voskoglou}, we apply the main result
of the previous section to the monoid $\mathbb{N}^{(I)}$
of functions $f$ from a well-ordered set $I$ to $\mathbb{N}$,
satisfying $f(i)=0$ for all but finitely many $i \in I$
(see Theorem~\ref{cornewtheorem}).
Using this result, we obtain generalizations of classical simplicity 
results by Voskoglou \cite{voskoglou1985} and Malm \cite{malm1988}
for differential polynomial rings in finitely many variables
(see Theorem~\ref{maintheorem0} and Theorem~\ref{maintheoremp}).

\section{The center}\label{Sec:Prel}

Throughout this article, unless otherwise stated, 
$R$ denotes a unital non-associative ring and $S = R[G ; \pi]$
denotes an Ore monoid ring.
Recall that the \emph{commutator} $[\cdot,\cdot] : R \times R \rightarrow R$ 
and the \emph{associator} $(\cdot,\cdot,\cdot) : R \times R \times R \rightarrow R$ 
are defined by $[r,s]=rs-sr$ and
$(r,s,t) = (rs)t - r(st)$ for all $r,s,t \in R$, respectively.
The \emph{commuter} of $R$, denoted by $C(R)$,
is the subset of $R$ consisting
of all elements $r \in R$ such that $[r,s]=0$,
for all $s \in R$.
The \emph{left}, \emph{middle} and \emph{right nucleus} of $R$,
denoted by $N_l(R)$, $N_m(R)$ and $N_r(R)$, respectively, are defined by 
$N_l(R) = \{ r \in R \mid (r,s,t) = 0, \ \mbox{for} \ s,t \in R\}$,
$N_m(R) = \{ s \in R \mid (r,s,t) = 0, \ \mbox{for} \ r,t \in R\}$, and
$N_r(R) = \{ t \in R \mid (r,s,t) = 0, \ \mbox{for} \ r,s \in R\}$.
The \emph{nucleus} of $R$, denoted by $N(R)$,
is defined to be equal to $N_l(R) \cap N_m(R) \cap N_r(R)$.
From the so-called \emph{associator identity} 
$u(r,s,t) + (u,r,s)t + (u,rs,t) = (ur,s,t) + (u,r,st)$,
which holds for all $u,r,s,t \in R$, it follows that
all of the subsets $N_l(R)$, $N_m(R)$, $N_r(R)$ and $N(R)$
are associative subrings of $R$.
The \emph{center} of $R$, denoted by $Z(R)$, is defined to be equal to the 
intersection $N(R) \cap C(R)$.
It follows immediately that $Z(R)$ is an associative, unital
and commutative subring of $R$.

\begin{rem}\label{rem:extendPI} 
If $a,b\in G$, then the additive map $\pi_b^a : R \to R$ 
may be extended to an additive map $\tilde{\pi}_b^a : S \to S$
by letting $\tilde{\pi}_b^a$ be the additive extension of the rule
$\tilde{\pi}_b^a ( r x^c ) = \pi_b^a(r) x^c$,
for $c \in G$ and $r \in R$.
It is worth pointing out that, apart from additivity, $\tilde{\pi}_b^a$
need not enjoy any of the properties (e.g. (D4) or (D5)) of the original map $\pi_b^a$.

By defining $S^G$ analogously to how $R^G$ was defined,
it is clear that $S^G = \sum_{a \in G} R^G x^a$. 
\end{rem}

\begin{prop}\label{fixedring}
If $\pi$ is strong, then the following assertions hold:
\begin{itemize} 

\item[(a)] $R^G$ is a subring of $R$;

\item[(b)] $Z(R)^G$ is a commutative subring of $R$.

\end{itemize}
\end{prop}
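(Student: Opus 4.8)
The plan is to verify the subring axioms directly, reading off most of them from the elementary axioms (D2) and (D3) and isolating the one place where the strongness hypothesis is genuinely needed. For part (a), the first two checks are routine. If $r,s \in R^G$, then for all $a,b \in G$ additivity (D3) gives $\pi^a_b(r-s) = \pi^a_b(r) - \pi^a_b(s) = \delta_{a,b}r - \delta_{a,b}s = \delta_{a,b}(r-s)$, so $R^G$ is an additive subgroup; and (D2) yields $\pi^a_b(1) = \delta_{a,b} = \delta_{a,b}\cdot 1$, so $1 \in R^G$. The substantive point is closure under multiplication, and this is exactly where (D7) or (D8) enters. Taking $r,s \in R^G$ and fixing $a,b \in G$: if (D7) holds then left $R^G$-linearity gives $\pi^a_b(rs) = r\,\pi^a_b(s) = r(\delta_{a,b}s) = \delta_{a,b}(rs)$, while if (D8) holds then right $R^G$-linearity gives $\pi^a_b(rs) = \pi^a_b(r)\,s = (\delta_{a,b}r)s = \delta_{a,b}(rs)$. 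Either way $rs \in R^G$. Note that no regrouping of a triple product occurs, so the non-associativity of $R$ causes no trouble here; the scalar $\delta_{a,b} \in \{0,1\}$ simply passes through.

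For part (b), the key observation is that $Z(R)^G = Z(R) \cap R^G$, which is immediate from the definitions since each $\pi^a_b$ is defined on all of $R$, hence in particular on $Z(R) \subseteq R$, and the fixing condition $\pi^a_b(r) = \delta_{a,b}r$ reads the same whether $r$ is viewed in $Z(R)$ or in $R$. As recorded immediately before the statement, $Z(R)$ is an associative, unital and commutative subring of $R$, and $R^G$ is a subring by part (a). The intersection of two unital subrings is again a unital subring (it is an additive subgroup, it contains $1$ since $1$ lies in both, and it is closed under multiplication). Finally, since $Z(R)^G$ is contained in the commutative ring $Z(R)$, any two of its elements commute, so $Z(R)^G$ is commutative.

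The only step drawing on the full hypotheses is the multiplicative closure in part (a); everything else is formal, using only (D2), (D3), and the previously established structure of $Z(R)$. Accordingly, I expect the point worth emphasising is that (D7)/(D8) is precisely what forces the product of two fixed elements to remain fixed — a conclusion one should not expect for a merely (non-strong) $G$-derivation, since there $\pi^a_b(rs)$ need not simplify at all.
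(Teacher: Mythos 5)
Your proof is correct and coincides with the paper's intended argument: the paper's own proof consists of the single line ``By assuming (D7) or (D8), the proof becomes straightforward,'' and your write-up is exactly the direct verification being alluded to, correctly using (D2) and (D3) for the unital additive-subgroup part, isolating (D7)/(D8) as the sole ingredient for multiplicative closure, and reducing (b) to $Z(R)^G = Z(R) \cap R^G$ together with the previously recorded fact that $Z(R)$ is a commutative subring.
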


\begin{proof}
By assuming (D7) or (D8), the proof becomes straightforward.
\end{proof}

\begin{prop}\label{xanmnr}
If $a \in G$, then $x^a \in N_r(S) \cap N_m(S)$.
\end{prop}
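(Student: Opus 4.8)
The plan is to reduce everything to a direct computation on monomials. Since the multiplication on $S$ is biadditive, the associator is additive in each of its three arguments; as every element of $S$ is a finite sum of monomials $r x^p$ with $r \in R$ and $p \in G$, it suffices to verify the two nucleus conditions when all entries are monomials and the distinguished entry is $x^a = 1 \cdot x^a$. The computational engine is a single simplification: for any $t \in R$ and $p \in G$, the multiplication rule \eqref{eq:multirule} together with (D2) gives
$$(t x^p) x^a = \sum_{c \in G} t \, \pi^p_c(1) \, x^{ca} = \sum_{c \in G} t \, \delta_{p,c} \, x^{ca} = t x^{pa},$$
where the last equality uses that $R$ is unital, so $t \cdot 1 = t$. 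Thus right multiplication by $x^a$ merely shifts the monoid exponent, and I will use this repeatedly.

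For the right nucleus, fix monomials $P = r x^p$ and $Q = s x^q$. Applying the simplification above, the associativity of the monoid $G$, and \eqref{eq:multirule}, one computes
$$(PQ) x^a = \sum_{d \in G} r \, \pi^p_d(s) \, x^{dqa} \qquad \text{and} \qquad P(Q x^a) = \sum_{c \in G} r \, \pi^p_c(s) \, x^{cqa}.$$
These expressions agree after renaming the summation index, so $(P, Q, x^a) = 0$ and hence $x^a \in N_r(S)$.

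For the middle nucleus the argument is the same in spirit but uses (D4) in an essential way. With $P$ and $Q$ as above, the simplification gives $P x^a = r x^{pa}$, whence $(P x^a) Q = \sum_{c \in G} r \, \pi^{pa}_c(s) \, x^{cq}$. On the other side, expanding $x^a Q$ by \eqref{eq:multirule} and then multiplying on the left by $P$ yields the double sum $\sum_{d,e \in G} r \, \pi^p_d(\pi^a_e(s)) \, x^{deq}$. Invoking (D4) to rewrite $\pi^{pa}_c = \sum_{de = c} \pi^p_d \circ \pi^a_e$ turns the left-hand expression into the very same double sum, so the two associator entries coincide and $x^a \in N_m(S)$.

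The only genuine subtlety, and the step I expect to be the crux, is this middle-nucleus computation: it is precisely the convolution identity (D4) that reconciles the single map $\pi^{pa}_c$ arising when one multiplies $x^p$ and $x^a$ first with the iterated application $\pi^p_d \circ \pi^a_e$ arising when one multiplies $x^a$ into $Q$ first. All sums involved are finite by (D0), so no reordering issues arise, and I note that neither (D5) nor the strongness hypotheses are required for this proposition.
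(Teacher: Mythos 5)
Your proof is correct and follows essentially the same route as the paper's: both reduce to monomials by biadditivity, use (D2) to see that right multiplication by $x^a$ simply shifts the exponent (giving $N_r(S)$ by a trivial index renaming), and invoke (D4) at exactly the same spot to reconcile $\pi^{pa}_c$ with the double sum $\sum_{de=c}\pi^p_d\circ\pi^a_e$ for the middle nucleus. Your closing observation that neither (D5) nor strongness is needed is also consistent with the paper, which proves this for an arbitrary Ore monoid ring.
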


\begin{proof}
Take $r,t \in R$ and $b,c \in G$. First we show that $x^a \in N_r(S)$:
$(rx^b , tx^c , x^a) = 
(rx^b \cdot tx^c)x^a - rx^b (tx^c \cdot x^a) =
\left( \sum_{d \in G} r \pi_d^b(t) x^{dc} \cdot x^a \right) - rx^b \cdot tx^{ca}  
= \sum_{d \in G} r \pi_d^b(t) x^{dca} -  
\sum_{d \in G} r \pi_d^b(t) x^{dca} = 0.$
Next we show that $x^a \in N_m(S)$:
$(rx^b , x^a , tx^c) = 
(rx^b \cdot x^a) t x^c - rx^b (x^a \cdot t x^c) = 
r x^{ba} \cdot tx^c - \sum_{d \in G} rx^b \cdot \pi_d^a(t) x^{dc} 
= \sum_{g \in G} r \pi_g^{ba}(t) x^{gc} -
\sum_{d,f \in G} r (\pi_f^b \circ \pi_d^a)(t) x^{fdc} 
= [(D4)] = \sum_{g \in G} \sum_{fd = g} r (\pi_f^b \circ \pi_d^a)(t) x^{gc} -
\sum_{d \in G} \sum_{f \in G} r (\pi_f^b \circ \pi_d^a)(t) x^{fdc} = 0.$
\end{proof}

Throughout the rest of this section, we shall assume that $G$ is commutative.

\begin{prop}\label{commute}
If $s \in S^G$ and $a \in G$, then $[s,x^a]=0$.
\end{prop}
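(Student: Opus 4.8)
The plan is to reduce to a single homogeneous term and then compute the two products directly. Since the commutator $[\,\cdot\,, x^a]$ is additive in its first argument (the multiplication being biadditive) and since $S^G = \sum_{b \in G} R^G x^b$ by Remark~\ref{rem:extendPI}, it suffices to establish the claim in the case $s = r x^b$ for some $r \in R^G$ and $b \in G$.

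For the right product, I would write $x^a = 1 \cdot x^a$ and apply \eqref{eq:multirule}, obtaining $s x^a = (r x^b)(1 \cdot x^a) = \sum_{c \in G} r \pi^b_c(1) x^{ca}$. Axiom (D2) gives $\pi^b_c(1) = \delta_{b,c}$, so only the term $c = b$ survives and $s x^a = r x^{ba}$. Note that this step does not use the hypothesis $r \in R^G$; the factor $r$ simply sits outside the maps $\pi^b_c$.

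For the left product, the same rule yields $x^a s = (1 \cdot x^a)(r x^b) = \sum_{c \in G} \pi^a_c(r) x^{cb}$. Here the hypothesis $r \in R^G$ is what is needed: by the very definition of $R^G$ we have $\pi^a_c(r) = \delta_{a,c} r$, so only the term $c = a$ survives and $x^a s = r x^{ab}$.

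Finally, commutativity of $G$ gives $ab = ba$, whence $s x^a = r x^{ba} = r x^{ab} = x^a s$, and therefore $[s, x^a] = 0$. The computation is entirely routine, so there is no real obstacle; the only point requiring care is that the two products invoke different ingredients — axiom (D2) for the right product, where the constant $1$ is fed into $\pi$, and the defining relation of $R^G$ for the left product, where $r$ is fed into $\pi$ — after which the matching of the exponents $ba$ and $ab$ depends on $G$ being commutative.
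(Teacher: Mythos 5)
Your proof is correct and follows essentially the same route as the paper's: both compute $s x^a$ via axiom (D2) applied to $\pi^b_c(1)$ and $x^a s$ via the defining relation $\pi^a_c(r)=\delta_{a,c}r$ of $R^G$, then invoke the standing assumption that $G$ is commutative to match $x^{ba}$ with $x^{ab}$. The only difference is cosmetic: you reduce to a single homogeneous term $r x^b$ by additivity, whereas the paper carries the full sum $s=\sum_{b\in G} r_b x^b$ through the same computation.
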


\begin{proof}
Suppose that $s = \sum_{b \in G} r_b x^b \in S^G$.
Then, for each $b\in G$, $r_b \in R^G$.
Thus
$[s , x^a] = 
s x^a - x^a s = 
\sum_{b \in G} r_b x^{ba} - 
\sum_{b \in G} \sum_{c \in G} \pi_c^a(r_b) x^{cb} = 
\sum_{b \in G} r_b x^{ba} - \sum_{b \in G} r_b x^{ab} = 0.$
\end{proof}

\begin{prop}\label{bracket}
Suppose that $s \in S^G$. Then $[s,S] \subseteq [s,R]S$.
In particular, if $[s,R] = \{ 0 \}$, then $s \in C(S)$.
\end{prop}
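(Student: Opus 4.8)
The plan is to reduce the assertion to monomials and then peel the right-hand power of $x$ off an arbitrary commutator, thereby landing inside $[s,R]S$. Since the commutator is biadditive and every element of $S$ is a finite $R$-linear combination of monomials $rx^c$ (with $r \in R$, $c \in G$), it suffices to prove that $[s, rx^c] \in [s,R]S$ for each such monomial; summing over the terms of a general $t \in S$ then yields $[s,S] \subseteq [s,R]S$. The key computational target is the clean identity
\[
[s, rx^c] = [s, rx^e]\, x^c .
\]
To set this up I would first factor $rx^c = (rx^e)x^c$, which follows from \eqref{eq:multirule} together with axiom (D2) (only the $d=e$ term of $\sum_d r\,\pi^e_d(1)x^{dc}$ survives). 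Recognizing $rx^e$ as the image of $r \in R$ inside $S$, the displayed identity exhibits $[s,rx^c]$ as the product of the element $[s,rx^e] \in [s,R]$ with $x^c \in S$, which is exactly membership in $[s,R]S$.

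The heart of the argument is establishing that identity, and here the \emph{only} real difficulty is the failure of associativity: I cannot freely reassociate, and I have at my disposal only that $x^c$ lies in $N_r(S) \cap N_m(S)$ (Proposition~\ref{xanmnr}), together with the commutation $sx^c = x^c s$ for $s \in S^G$ (Proposition~\ref{commute}); note that $x^c \in N_l(S)$ is \emph{not} available. The plan is to process the two products separately. For the first, $s\bigl((rx^e)x^c\bigr) = \bigl(s(rx^e)\bigr)x^c$ is precisely $(s, rx^e, x^c)=0$, which holds because $x^c \in N_r(S)$. For the second, I would move $s$ to the right in three controlled steps: $\bigl((rx^e)x^c\bigr)s = (rx^e)(x^c s)$ by $x^c \in N_m(S)$; then $(rx^e)(x^c s) = (rx^e)(s x^c)$ by the commutation relation; then $(rx^e)(s x^c) = \bigl((rx^e)s\bigr)x^c$ by $x^c \in N_r(S)$. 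Subtracting and using right-distributivity to factor out $x^c$ gives $[s,rx^c] = \bigl(s(rx^e) - (rx^e)s\bigr)x^c = [s,rx^e]\,x^c$, as desired. The main obstacle is thus purely bookkeeping: verifying that every reassociation genuinely matches one of the permitted nucleus memberships (and ordering the three steps so that $N_l$ is never needed).

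Finally, the ``in particular'' clause is immediate from the inclusion just proved: if $[s,R] = \{0\}$, then $[s,R]S = \{0\}$, so $[s,S] \subseteq \{0\}$, meaning $[s,t]=0$ for every $t \in S$, i.e.\ $s \in C(S)$.
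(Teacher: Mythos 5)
Your proof is correct and takes essentially the same route as the paper's: both reduce by biadditivity to a monomial $t x^b$ and establish $[s,tx^b]=[s,t]x^b$ via exactly the same chain of moves, namely $x^b \in N_r(S)$ and $x^b \in N_m(S)$ from Proposition~\ref{xanmnr} together with $[s,x^b]=0$ from Proposition~\ref{commute}. Your explicit factorization $rx^c=(rx^e)x^c$ via (D2) and your step-by-step bookkeeping of which nucleus membership licenses each reassociation merely spell out what the paper's one-line computation leaves implicit.
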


\begin{proof}
Suppose that $s = \sum_{a \in G} r_a x^a \in S^G$.
Take $t \in R$ and $b \in G$.
From Proposition~\ref{xanmnr} and Proposition~\ref{commute}, it follows that
$[s,t x^b] = s (t x^b)- (t x^b) s = 
(s t)x^b - t (x^b s) = (s t) x^b - t (s x^b) =
(st) x^b - (ts) x^b = [s,t] x^b \in [s,R]S.$
The last part of the statement follows immediately.
\end{proof}

\begin{prop}\label{prop:M4properties}
The following assertions hold:
\begin{itemize}

\item[(a)] (D7) holds if and only if for every $a \in G$,
$(x^a , S^G , R) = \{ 0 \}$;

\item[(b)] (D8) holds if and only if for every $a \in G$,
$(x^a , R , S^G) = \{ 0 \}$.

\end{itemize}
\end{prop}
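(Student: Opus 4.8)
The plan is to prove both equivalences by expanding the relevant associators directly via the multiplication rule \eqref{eq:multirule} and reading off coefficients. Since the associator is triadditive and, by Remark~\ref{rem:extendPI}, every element of $S^G$ is a finite sum of terms $u x^b$ with $u \in R^G$, while every $r \in R$ sits inside $S$ as $r x^e$ (with $e$ the identity of $G$), it suffices throughout to work with such monomials. Two computational facts will be used repeatedly: first, for $u \in R^G$ one has $x^a \cdot (u x^b) = u x^{ab}$, because $\pi^a_c(u) = \delta_{a,c} u$; and second, for an arbitrary $w \in R$ one has the full rule $x^a \cdot (w x^c) = \sum_{d \in G} \pi^a_d(w) x^{dc}$.

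For the forward implication of (a) I would compute the associator $(x^a , u x^b , r x^e)$ for $u \in R^G$ and $r \in R$. The outer product $(x^a \cdot u x^b) \cdot r x^e$ equals $u x^{ab} \cdot r x^e = \sum_c u \pi^{ab}_c(r) x^c$, and applying (D4) rewrites this as $\sum_{d,f} u (\pi^a_d \circ \pi^b_f)(r) x^{df}$. The inner product $x^a \cdot (u x^b \cdot r x^e)$ equals $x^a \cdot \bigl( \sum_c u \pi^b_c(r) x^c \bigr)$; here the intermediate coefficients $u \pi^b_c(r)$ need not lie in $R^G$, so the full multiplication rule forces the term $\pi^a_d\bigl( u \pi^b_c(r) \bigr)$ to appear. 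This is exactly where (D7) enters: left $R^G$-linearity lets me extract $u$ as $u (\pi^a_d \circ \pi^b_c)(r)$, after which, relabelling the summation index, the inner product becomes $\sum_{d,f} u (\pi^a_d \circ \pi^b_f)(r) x^{df}$ and the associator vanishes, giving $(x^a , S^G , R) = \{ 0 \}$.

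For the converse in (a) I would specialise to $s = u x^e$ and compute $(x^a , u x^e , r x^e)$, which collapses (using (D1) to simplify $u x^e \cdot r x^e = u r x^e$) to $\sum_c \bigl( u \pi^a_c(r) - \pi^a_c(ur) \bigr) x^c$. The hypothesis that this associator is zero, read coefficientwise, yields $\pi^a_c(ur) = u \pi^a_c(r)$ for all $a,c \in G$, which is precisely (D7). Part (b) is entirely parallel: I would expand $(x^a , r x^e , u x^b)$, where the role of (D7) is played by (D8), i.e.\ right $R^G$-linearity; the forward direction again matches two sums after invoking linearity, and specialising to $s = u x^e$ produces the coefficient $\pi^a_c(r) u - \pi^a_c(ru)$, whose vanishing is exactly right $R^G$-linearity.

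The main obstacle I anticipate is purely bookkeeping rather than conceptual: in the forward direction one must resist simplifying the inner product prematurely, since the coefficients $u \pi^b_c(r)$ leave $R^G$, and it is only after combining $R^G$-linearity with (D4) that the two halves of the associator align. Keeping careful track of which steps may use the simplified rule $x^a \cdot (u x^b) = u x^{ab}$ versus the full rule $x^a \cdot (w x^c) = \sum_d \pi^a_d(w) x^{dc}$ is where an error is most likely to slip in.
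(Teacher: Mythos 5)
Your proposal is correct, and it follows the same overall template as the paper's proof --- reduce to monomials by triadditivity of the associator, expand both bracketings, and compare coefficients --- but the supporting machinery is genuinely different. The paper first rewrites the associator using its earlier structural results: Proposition~\ref{xanmnr} (that $x^a \in N_r(S) \cap N_m(S)$) and Proposition~\ref{commute} (that $[S^G,x^a]=0$, which rests on the standing assumption in that section that $G$ is commutative), and in both parts shortens the computation by reducing the exponent in the final slot to the identity via Proposition~\ref{xanmnr}; only then does it apply (D7), resp.\ (D8), at the coefficient level. You instead expand everything directly from the multiplication rule \eqref{eq:multirule} and reconcile the outer product $u x^{ab}\cdot r x^e$ with the inner product $x^a \cdot (u x^b \cdot r x^e)$ by invoking (D4) to split $\pi^{ab}_c$ into the compositions $\pi^a_d \circ \pi^b_f$ --- in effect inlining the $N_m$-half of the proof of Proposition~\ref{xanmnr} rather than citing it. What your route buys: it is self-contained, and as written it nowhere uses commutativity of $G$ (your identities $x^a\cdot(u x^b)=u x^{ab}$ for $u \in R^G$ and the (D4) matching hold for an arbitrary monoid), so your argument actually establishes the equivalences in greater generality than the paper's proof, which is embedded in the commutative setting. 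What it costs is the heavier double-sum bookkeeping you yourself flag, which the paper avoids by leaning on its nucleus and commutation propositions. Your converse directions --- specializing the middle, resp.\ right, slot to $u x^e$ with $u \in R^G$ and reading off $\pi^a_c(ur)=u\pi^a_c(r)$, resp.\ $\pi^a_c(ru)=\pi^a_c(r)u$, coefficientwise --- coincide with the paper's, which phrases the same specialization as the computation of $(x^a,r,t)$ with $r \in R^G$, $t \in R$.
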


\begin{proof}
Take $a,b \in G$, $r \in R^G$ and $t \in R$.

(a) Suppose that (D7) holds. Then,
using Proposition~\ref{xanmnr} and Proposition~\ref{commute}
we get that
$(x^a , rx^b , t) = 
(x^a \cdot rx^b)t - x^a( rx^b \cdot t) =
( (x^a r) x^b ) t - x^a( r(x^b t) ) 
= (r x^a)(x^b t) - x^a (r (x^b t) ) = 
\sum_{c \in G} \left( (rx^a)(\pi_c^b(t) x^c) - 
x^a ( r \pi_c^b(t) x^c ) \right) 
= \sum_{c \in G} \sum_{d \in G} \left( r \pi_d^a( \pi_c^b(t) ) x^{dc} -
\pi_d^a( r \pi_c^b(t) ) x^{dc} \right) = 0.$
Now suppose that the equality $(x^a , S^G , R) = \{ 0 \}$
holds for any $a \in G$. Then, from Proposition~\ref{xanmnr}
and Proposition~\ref{commute}, we get that
$0 = (x^a , r , t) = 
(x^a r)t - x^a(rt) = 
(r x^a)t - x^a(rt) = 
r ( x^a t) - x^a(rt) = 
\sum_{c \in G}  \left( r \pi_c^a(t)x^c - \pi_c^a(rt) x^c \right)$
from which (D7) follows.

(b) Suppose that (D8) holds. 
We wish to show that $(x^a , t , rx^b) = 0$.
From Proposition~\ref{xanmnr}, it follows that it is
enough to show this equality for $b = 0$.
Now, using Proposition~\ref{xanmnr} and Proposition~\ref{commute}, we get that
$(x^a , t , r) = 
(x^a t)r - x^a (tr) = 
\sum_{c \in G} \left( (\pi_c^a(t) x^c ) r - \pi_c^a(tr) x^c \right) =
\sum_{c \in G} ( \pi_c^a(t)r - \pi_c^a(tr)) x^c = 0.$
On the other hand, if the equality $(x^a , R , S^G) = \{ 0 \}$ holds,
then, in particular,
$0 = (x^a , t , r) = 
(x^a t)r - x^a (tr) = 
\sum_{c \in G} \left( (\pi_c^a(t) x^c ) r - \pi_c^a(tr) x^c \right) =
\sum_{c \in G} ( \pi_c^a(t)r - \pi_c^a(tr)) x^c$,
which shows that (D8) holds.
\end{proof}

\begin{prop}\label{leftbracket}
Suppose that $s \in S$. Then $(s,S,S) \subseteq (s,R,R)S$.
In particular, if $(s,R,R) = \{ 0 \}$, then $s \in N_l(S)$.
\end{prop}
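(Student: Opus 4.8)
The plan is to reduce the general associator $(s,u,v)$ to associators whose last two entries already lie in $R$, stripping off the monoid-element factors one at a time by means of Proposition~\ref{xanmnr}, which tells us that each $x^a$ lies in $N_m(S) \cap N_r(S)$. Since multiplication in $S$ is biadditive, the associator $(s,\cdot,\cdot)$ is additive in each of its last two arguments, so it is enough to establish the containment when $u = t x^b$ and $v = w x^c$ with $t,w \in R$ and $b,c \in G$; the general case then follows by biadditivity. Throughout I would use the associator identity, which holds verbatim in $S$ as well as in $R$, being a purely formal consequence of distributivity.

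First I would strip the third factor. Viewing $w \in R$ as an element of $S$ and writing $v = w \cdot x^c$, I apply the associator identity to the quadruple $s, u, w, x^c$. Because $x^c \in N_r(S)$, every resulting term carrying $x^c$ in its right slot vanishes, leaving $(s,u,w x^c) = (s,u,w)\,x^c$; in particular $(s, t x^b, w x^c) = (s, t x^b, w)\,x^c$. Next I would strip the middle factor. Writing $t x^b = t \cdot x^b$ and applying the associator identity to $s, t, x^b, w$, the membership $x^b \in N_m(S)$ annihilates the terms carrying $x^b$ in the middle slot while $x^b \in N_r(S)$ annihilates the term carrying it on the right, giving $(s, t x^b, w) = (s, t, x^b w)$. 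The key observation is that $x^b w$ is no longer a monomial: by \eqref{eq:multirule} together with (D2) it expands as $x^b w = \sum_{d \in G} \pi^b_d(w)\, x^d$. Substituting this, using additivity in the third slot, and reusing the third-factor reduction yields $(s, t x^b, w) = \sum_{d \in G} (s, t, \pi^b_d(w))\, x^d$, in which every associator $(s,t,\pi^b_d(w))$ has its last two entries in $R$ and so belongs to $(s,R,R)$.

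Combining the two reductions gives $(s, t x^b, w x^c) = \big( \sum_{d \in G} (s,t,\pi^b_d(w)) x^d \big) x^c$, and the last thing to verify is that this genuinely lands in $(s,R,R)S$. This is where the non-associativity of $S$ must be handled with care, and I expect it to be the only real obstacle: one cannot freely reassociate products, so I would once more invoke $x^c \in N_r(S)$ to obtain $\big( (s,t,\pi^b_d(w)) x^d \big) x^c = (s,t,\pi^b_d(w))\,(x^d x^c) = (s,t,\pi^b_d(w))\, x^{dc}$ term by term, exhibiting the whole expression as a sum of elements of $(s,R,R)$ multiplied by elements of $S$, i.e. as a member of $(s,R,R)S$. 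The concluding assertion is then immediate: if $(s,R,R) = \{0\}$ then $(s,S,S) \subseteq \{0\} \cdot S = \{0\}$, so that $s \in N_l(S)$.
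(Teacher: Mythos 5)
Your proof is correct and takes essentially the same route as the paper's: both reduce to monomial arguments, use Proposition~\ref{xanmnr} to strip the rightmost factor (the paper's ``it is enough to prove this for $c=0$''), and arrive at the same key identity $(s,tx^b,w)=\sum_{d\in G}(s,t,\pi^b_d(w))\,x^d$, your detour through the associator identity being merely a repackaging of the paper's direct expansion of $(s,rx^b,t)$. If anything, you are slightly more explicit than the paper in justifying the final right multiplication by $x^c$ (reassociating via $x^c\in N_r(S)$ and $x^dx^c=x^{dc}$), a point the paper leaves implicit.
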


\begin{proof}
Suppose that $s = \sum_{a \in G} r_a x^a \in S$.
Take $r,t \in R$ and $b,c \in G$.
We wish to show that $(s , rx^b , tx^c) \subseteq (s,R,R)S$.
From Proposition~\ref{xanmnr}, it follows that it is 
enough to prove this inclusion for $c=0$. 
Using Proposition~\ref{xanmnr} again, it follows that
$ (s , rx^b, t) = 
(s \cdot rx^b)t - s( rx^b \cdot t ) = 
( (s r) x^b ) t - s( r ( x^b t ) ) 
= (sr)(x^b t) - \sum_{d \in G} s ( r \pi_d^b(t) x^d ) = 
\sum_{d \in G} (sr) \pi_d^b(t) x^d - \sum_{d \in G} ( s ( r \pi_d^b(t) ) ) x^d 
= \sum_{d \in G} ( (sr)\pi_d^b(t) - s( r \pi_d^b(t) ) ) x^d =
\sum_{d \in G} (s,r,\pi_d^b(t)) x^d \subseteq (s,R,R)S.$
The last part of the statement follows immediately.
\end{proof}

\begin{prop}\label{parenthesis}
Suppose that (D7) (or (D8)) holds and $s \in S^G$.
Then $(S,s,S) \subseteq (R,s,R)S$ (or $(S,S,s) \subseteq (R,R,s)S$).
In particular, if $(R,s,R) = \{ 0 \}
$ (or $(R,R,s) = \{ 0 \}$), 
then $s \in N_m(S)$ (or $s \in N_r(S)$).
\end{prop}

\begin{proof}
Take $r,t \in R$ and $b,c \in G$.
Suppose that (D7) holds. We wish to show that $(rx^b , s , tx^c) \subseteq (R,s,R)S$.
From Proposition~\ref{xanmnr}, it follows that it is enough
to show this inclusion for $c = 0$. 
Now, using Proposition~\ref{xanmnr}, 
Proposition~\ref{commute} and Proposition~\ref{prop:M4properties}(a), we get that
$(rx^b , s , t) =
( rx^b \cdot s ) t - rx^b (s \cdot t) =
(r (x^b s))t - r ( x^b (st) ) =
(r (s x^b ) ) t - r ( (x^b s ) t ) = ( (rs) x^b ) t - r ( (s x^b ) t ) 
= (rs)(x^b t) - r ( s ( x^b t ) ) =
\sum_{d \in G} \left( (rs) \pi_d^b(t) x^d - r (s \pi_d^b(t) ) x^d \right) 
= \sum_{d \in G} ( (rs) \pi_d^b(t) - r (s \pi_d^b(t) ) ) x^d = 
\sum_{d \in G} (r , s , \pi_d^b(t) ) x^d \in (R , s , R)S.$
The second part follows immediately.
Now suppose that (D8) holds. We wish to show that $(rx^b , tx^c , s) \subseteq (R,R,s)S$.
From Proposition~\ref{xanmnr} and Proposition~\ref{commute}, we get that
$ (rx^b , tx^c , s) = 
(rx^b \cdot tx^c)s - rx^b ( tx^c \cdot s) =
(rx^b \cdot t)(x^c s) - rx^b (t (x^c s) ) 
= (rx^b \cdot t)(s x^c) - rx^b (t (s x^c)) = 
( (rx^b \cdot t) s ) x^c - rx^b ( (ts) x^c )  
= ( ( rx^b \cdot t ) s ) x^c - ((rx^b)(ts))x^c = 
(rx^b , t , s) x^c$.
Thus, it is enough to prove the inclusion 
$(rx^b , tx^c , s) \subseteq (R,R,s)S$ for $c=0$.
Now, using Proposition~\ref{xanmnr}, Proposition~\ref{commute} 
and Proposition~\ref{prop:M4properties}(b), we get that
\begin{align*}
(r x^b , t , s) &= 
(r x^b \cdot t)s - rx^b (t \cdot s) = 
(r x^b \cdot t)s - r ( x^b (ts) ) =
(r x^b \cdot t)s - r ( (x^b t) s ) \\
&= \sum_{d \in G} \left( ( r \pi_d^b(t) x^d ) s - r( \pi_d^b(t) x^d \cdot s ) \right) =
\sum_{d \in G} \left( ( r \pi_d^b(t) ) (x^d s) - r( \pi_d^b(t) (x^d s) ) \right) \\
&= \sum_{d \in G} \left( ( r \pi_d^b(t) ) (s x^d) - r( \pi_d^b(t) (s x^d) ) \right) =  
\sum_{d \in G} \left( (( r \pi_d^b(t) ) s) x^d - ( r ( \pi_d^b(t)  s))  x^d \right) \\
&= \sum_{d \in G} ( (( r \pi_d^b(t) ) s) - ( r( \pi_d^b(t)  s) ) ) x^d =
\sum_{d \in G} (r , \pi_d^b(t) , s) x^d \in (R , R , s)S.
\end{align*}
The second part follows immediately.
\end{proof}

\begin{lem}\label{intersection}
The following
equalities hold:
\begin{displaymath}
	Z(S) = C(S) \cap N_l(S) \cap N_m(S)
				= C(S) \cap N_l(S) \cap N_r(S)
				= C(S) \cap N_m(S) \cap N_r(S)
\end{displaymath}
\end{lem}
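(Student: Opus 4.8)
The plan is to exploit the fact that, for a commuting element, the three kinds of associator formed with it are not independent. First I would record the trivial inclusions: since $Z(S) = C(S) \cap N_l(S) \cap N_m(S) \cap N_r(S)$ by definition of $Z(S) = N(S) \cap C(S)$, the center $Z(S)$ is contained in each of the three displayed intersections, each of which is obtained by omitting exactly one of the three nuclei. Hence it remains only to prove the reverse inclusions, that is, that an element of $C(S)$ lying in two of the nuclei automatically lies in the third.

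The key step is the following identity: for every $a \in C(S)$ and all $r, s \in S$,
\[
(a, r, s) - (r, a, s) + (r, s, a) = 0.
\]
To establish it, fix $a \in C(S)$, so that $a$ commutes with $r$, with $s$, and with $rs$. Using $ar = ra$ in the first associator, $as = sa$ in the second, and $(rs)a = a(rs)$ in the third, each of the three associators rewrites as a difference of two products, namely $(a,r,s) = (ra)s - a(rs)$, $(r,a,s) = (ra)s - r(sa)$, and $(r,s,a) = a(rs) - r(sa)$. Substituting these into the left-hand side and cancelling term by term yields $0$. This computation is purely formal and uses only the commuting property of $a$; in particular it does not invoke any associativity, so it remains valid in the non-associative ring $S$.

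With this identity in hand, the three reverse inclusions follow at once. Recall that $a \in N_l(S)$, $a \in N_m(S)$ and $a \in N_r(S)$ are equivalent to the vanishing of $(a,r,s)$, of $(r,a,s)$, and of $(r,s,a)$, respectively, for all $r,s \in S$. If $a \in C(S)$ lies in two of the nuclei, then two of the three associators vanish identically, and the identity forces the third to vanish as well, placing $a$ in the remaining nucleus. Applying this in the three cases --- where $N_r(S)$, $N_m(S)$ and $N_l(S)$ is the omitted factor, respectively --- shows that each displayed intersection is contained in $C(S) \cap N_l(S) \cap N_m(S) \cap N_r(S) = Z(S)$, which together with the trivial inclusions completes the proof.

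I do not expect a serious obstacle here; the only point requiring genuine care is verifying the sign pattern in the associator identity, which I would confirm by writing all three associators over the common products $(ra)s$, $a(rs)$ and $r(sa)$ and checking that each cancels exactly once.
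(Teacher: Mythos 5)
Your proof is correct. The key identity checks out: in any non-associative ring one has, for all $a,r,s$, the relation $(a,r,s)-(r,a,s)+(r,s,a)=[a,r]s+r[a,s]-[a,rs]$, so all three terms on the right vanish when $a\in C(S)$, which is exactly your term-by-term cancellation; combined with the trivial inclusion $Z(S)\subseteq C(S)\cap N_l(S)\cap N_m(S)\cap N_r(S)$, the three equalities follow as you say. The paper itself gives no in-line argument for this lemma, deferring entirely to \cite[Proposition 2.1]{nystedtoinertrichter}, so your proposal supplies the details the paper outsources; moreover, it is essentially the same argument as in that reference --- note that later, in the proof of Proposition~\ref{less}, the paper passes from $[b,R]=\{0\}$ together with $(b,R,R)=(R,R,b)=\{0\}$ to $(R,b,R)=\{0\}$ by appealing to ``the proof of \cite[Proposition 2.1]{nystedtoinertrichter}'', which is precisely your identity specialized to that case. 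So your route is not genuinely different, just self-contained, which is a virtue here; there is no gap.
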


\begin{proof}
This holds for any non-associative ring $S$ (see e.g. \cite[Proposition 2.1]{nystedtoinertrichter}).
\end{proof}

\begin{cor}\label{corcenter}
If $s \in S^G$ and $\pi$ is a strong $G$-derivation on $R$, then $s \in Z(S)$ if and only if
$s$ commutes and associates with all elements of $R$.
\end{cor}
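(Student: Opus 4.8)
The plan is to split the equivalence into its two implications and to treat the converse by a case distinction according to whether (D7) or (D8) holds, since $\pi$ being strong guarantees that at least one of them does. The forward implication requires no work: if $s \in Z(S) = N(S) \cap C(S)$, then by definition $s$ commutes with every element of $S$ and associates with every triple of elements of $S$; restricting to the subset $R \subseteq S$ shows at once that $s$ commutes and associates with all elements of $R$.

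For the converse I read the hypothesis as $[s,R] = \{ 0 \}$ together with $(s,R,R) = (R,s,R) = (R,R,s) = \{ 0 \}$. Since $s \in S^G$ and $[s,R] = \{ 0 \}$, Proposition~\ref{bracket} places $s$ in $C(S)$; and since $(s,R,R) = \{ 0 \}$, Proposition~\ref{leftbracket} places $s$ in $N_l(S)$. It remains only to put $s$ into one further nucleus, and this is where the case distinction enters. If (D7) holds, then from $s \in S^G$ and $(R,s,R) = \{ 0 \}$ Proposition~\ref{parenthesis} yields $s \in N_m(S)$, and combining this with the two memberships already obtained, Lemma~\ref{intersection} in the form $Z(S) = C(S) \cap N_l(S) \cap N_m(S)$ gives $s \in Z(S)$. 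If instead (D8) holds, then from $s \in S^G$ and $(R,R,s) = \{ 0 \}$ Proposition~\ref{parenthesis} yields $s \in N_r(S)$, and Lemma~\ref{intersection} in the form $Z(S) = C(S) \cap N_l(S) \cap N_r(S)$ again gives $s \in Z(S)$.

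I expect no genuine obstacle here: the corollary is essentially an assembly of the four preceding results. The one point meriting attention is the bookkeeping of the asymmetry. Proposition~\ref{parenthesis} produces membership in $N_m(S)$ under (D7) but in $N_r(S)$ under (D8), and one must observe that Lemma~\ref{intersection} supplies a characterization of $Z(S)$ matching each of these two possibilities. Consequently a single middle-or-right nucleus membership, whichever the strong hypothesis happens to furnish, suffices alongside the memberships in $C(S)$ and $N_l(S)$, and there is never any need to establish all three nuclei simultaneously.
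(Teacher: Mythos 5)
Your proof is correct and follows essentially the same route as the paper, whose one-line proof cites Propositions~\ref{leftbracket} and \ref{parenthesis} together with Lemma~\ref{intersection} for the ``if'' direction. You merely spell out the details the paper leaves implicit, including the use of Proposition~\ref{bracket} to get $s \in C(S)$ and the case distinction between (D7) and (D8) matched against the two corresponding characterizations of $Z(S)$ in Lemma~\ref{intersection}.
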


\begin{proof}
The ''only if'' statement is immediate.
The ''if'' statement follows from 
Proposition \ref{leftbracket}, Proposition \ref{parenthesis}
and Lemma \ref{intersection}.
\end{proof}

\section{Simplicity}\label{Sec:Simplicity}

In this section, we show the main result
of this article (see Theorem~\ref{newmaintheorem}). It gives
us necessary and sufficient conditions for simplicity
of a large class of differential monoid rings.

\begin{defn}\label{defGsimple}
Let $I$ be an ideal of $R$. We say that $I$ is \emph{$G$-invariant}
if for all $a,b \in G$, the inclusion $\pi_b^a(I) \subseteq I$ holds.
We say that $R$ is \emph{$G$-simple} if $\{ 0 \}$ and $R$ are 
the only $G$-invariant ideals of $R$.
In Remark~\ref{rem:extendPI} we extended the maps $\pi_b^a : R \rightarrow R$,
for $a,b \in G$, to additive maps $\tilde{\pi}^a_b : S \to S$.
Using those extensions we may speak of \emph{$G$-simplicity of $S$}.
\end{defn}

\begin{prop}\label{RGsimple}
If $S$ is $G$-simple, then $R$ is $G$-simple.
\end{prop}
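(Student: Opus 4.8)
The plan is to prove the contrapositive: assuming $R$ is \emph{not} $G$-simple, I will exhibit a nonzero proper $G$-invariant ideal of $S$, which shows that $S$ is not $G$-simple. So suppose $I$ is a $G$-invariant ideal of $R$ with $\{0\} \neq I \neq R$. The natural candidate is the coefficientwise ideal $J = \sum_{a \in G} I x^a$, that is, the set of all $\sum_{a \in G} r_a x^a \in S$ for which every coefficient $r_a$ lies in $I$. This is clearly an additive subgroup of $S$, and products of finitely supported elements remain finitely supported by (D0), so there are no convergence issues.

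First I would check that $J$ is neither trivial nor everything. Since $I \neq \{0\}$, choosing $0 \neq r \in I$ gives $r x^e \in J$, so $J \neq \{0\}$; and since $I \neq R$, choosing $s \in R \setminus I$ gives $s x^e \in S \setminus J$, because the $x^e$-coefficient of $s x^e$ lies outside $I$, so $J \neq S$. Next, for $G$-invariance of $J$, I would apply $\tilde{\pi}_b^a$ to a typical element $\sum_{c \in G} r_c x^c \in J$: by the definition of the extension in Remark~\ref{rem:extendPI} this equals $\sum_{c \in G} \pi_b^a(r_c) x^c$, and since each $r_c \in I$ and $I$ is $G$-invariant, each $\pi_b^a(r_c) \in I$, whence $\tilde{\pi}_b^a(J) \subseteq J$ for all $a,b \in G$.

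The substance is verifying that $J$ is a two-sided ideal, which in the non-associative setting means checking both $SJ \subseteq J$ and $JS \subseteq J$ separately. It suffices to test the defining relation \eqref{eq:multirule} on generators and then extend biadditively. For $SJ$, compute $(s x^a)(r x^c) = \sum_{d \in G} s\,\pi_d^a(r)\, x^{dc}$ with $s \in R$ and $r \in I$: here $G$-invariance of $I$ gives $\pi_d^a(r) \in I$, and then the left-ideal property $RI \subseteq I$ gives $s\,\pi_d^a(r) \in I$, so each term lies in $J$. For $JS$, compute $(r x^c)(s x^a) = \sum_{d \in G} r\,\pi_d^c(s)\, x^{da}$ with $r \in I$ and $s \in R$: now $r \in I$ sits on the left of the product $r\,\pi_d^c(s)$, so the right-ideal property $IR \subseteq I$ alone yields $r\,\pi_d^c(s) \in I$. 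Since distinct $d$ may produce the same group element, the actual coefficient at each $g \in G$ is a finite sum of elements of $I$ and hence lies in $I$; extending biadditively over the finite supports shows $SJ \subseteq J$ and $JS \subseteq J$.

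I do not expect a genuine obstacle here: all the computations are routine biadditive extensions of \eqref{eq:multirule}. The only points requiring care are bookkeeping ones — confirming the ideal condition through both one-sided inclusions without invoking any associativity, and correctly tracking which hypothesis on $I$ is used where (the inclusion $SJ \subseteq J$ consumes both $G$-invariance of $I$ and the left-ideal property, whereas $JS \subseteq J$ needs only the right-ideal property). Assembling these observations, $J$ is a nonzero proper $G$-invariant ideal of $S$, contradicting $G$-simplicity of $S$; therefore $S$ being $G$-simple forces $R$ to be $G$-simple.
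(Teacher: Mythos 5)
Your proof is correct and follows essentially the same route as the paper's: both construct the coefficientwise ideal $J = \sum_{a \in G} I x^a$ from a $G$-invariant ideal $I$ of $R$ and verify, via the biadditive extension of \eqref{eq:multirule}, that $J$ is a $G$-invariant ideal of $S$, with your contrapositive packaging being trivially equivalent to the paper's direct argument (the paper concludes $J=S$ forces $I=R$, which is exactly your properness check read backwards). Your version is, if anything, slightly more careful, spelling out $J \neq \{0\}$, $J \neq S$, the $\tilde{\pi}_b^a$-invariance, and the possible coincidence of group elements $dc$ in a non-cancellative monoid, all of which the paper leaves implicit.
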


\begin{proof}
Suppose that $I$ is a non-zero $G$-invariant ideal of $R$.
Let $J$ denote the non-zero additive group $\sum_{g \in G} I x^g$. 
It is clear that $J$ is $G$-invariant.
We claim that $J$ is an ideal of $S$.
If we assume that the claim holds, then, by $G$-simplicity
of $S$, we get that $J = S$, and, hence, that $I = R$.
Now we show the claim.
Take $r \in R$, $i \in I$ and $a,b \in G$.
Since $I$ is a $G$-invariant ideal of $R$, we get that 
$(r x^a)(i x^b) = \sum_{c \in G} r \pi_c^a(i) x^{cb} \in 
\sum_{c \in G} R I x^{cb} \subseteq J$.
On the other hand, we also get that
$(i x^b)(r x^a) = \sum_{c \in G} i \pi_c^b(r) x^{ca} \in 
\sum_{c \in G} I R x^{ca} \subseteq J$.
\end{proof}

\begin{prop}\label{ZSGfield}
If $S$ is $G$-simple and for all $a,b \in G$, the map 
$\tilde{\pi}^a_b$ is right (left) 
$Z(S)^G$-linear, then $Z(S)^G$ is a field.
\end{prop}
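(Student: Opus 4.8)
The plan is to show that an arbitrary nonzero element of $Z(S)^G$ is invertible in $Z(S)^G$. Fix a nonzero $z \in Z(S)^G$. The natural first move is to consider the set
\[
J = \tilde{\pi}\text{-closure of } Sz,
\]
or, more precisely, to identify a $G$-invariant ideal of $S$ that is generated in a suitable sense by $z$ and then invoke $G$-simplicity of $S$. Since $z$ is central, the two-sided ideal $SzS$ coincides with $Sz$ up to associativity issues; because $z \in S^G$ and $\pi$ interacts nicely with $S^G$ (Remark~\ref{rem:extendPI} gives $S^G = \sum_{a} R^G x^a$), I expect $Sz$ to already be a left ideal, and centrality of $z$ to upgrade it to a two-sided ideal. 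The key structural input I would exploit first is that $z \in Z(S)$ means multiplication by $z$ is well-behaved with respect to both the ring multiplication and the associator, so that $Sz$ is genuinely an ideal rather than merely a one-sided object.

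The crux is the $G$-invariance of this ideal, and this is exactly where the hypothesis that each $\tilde{\pi}^a_b$ is right (or left) $Z(S)^G$-linear enters. I would compute $\tilde{\pi}^a_b(tz)$ for $t \in S$ and show, using right $Z(S)^G$-linearity, that it lands back in $Sz$; the linearity lets me pull the central factor $z$ out of $\tilde{\pi}^a_b$, giving $\tilde{\pi}^a_b(tz) = \tilde{\pi}^a_b(t)\,z \in Sz$ (and symmetrically in the left-linear case). Thus $J = Sz$ is a nonzero $G$-invariant ideal of $S$. By $G$-simplicity of $S$ we conclude $J = S$, so in particular $1 \in Sz$, which produces an element $w \in S$ with $wz = 1$.

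From a left inverse $w$ with $wz = 1$ I then need to descend to a genuine two-sided inverse lying in $Z(S)^G$. Here I would use that $z$ is central and lies in the commutative, associative, unital ring $Z(S)$ (as recorded in Section~\ref{Sec:Prel}): within $Z(S)$ a one-sided inverse is automatically two-sided, and the inverse of a central element is again central. The remaining point is to verify that the inverse actually sits in $Z(S)^G$ and not merely in $Z(S)$; I would argue this by applying $\tilde{\pi}^a_b$ to the relation $wz = 1$ and using the $Z(S)^G$-linearity together with $\tilde{\pi}^a_b(1) = \delta_{a,b}$ (axiom (D2), or (D6) for the diagonal) to force $w \in S^G$, whence $w \in Z(S)^G$.

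The main obstacle I anticipate is \emph{non-associativity}: in a general non-associative ring one cannot freely regroup products, so the step "$wz = 1$ implies $w$ is a genuine inverse of $z$" requires care, and even showing $Sz$ is a two-sided ideal closed under the ambient multiplication must be justified via the nucleus/center machinery rather than taken for granted. The safeguard is that $z \in Z(S) = N(S) \cap C(S)$, so $z$ associates with everything; this should let me perform the necessary regroupings. The secondary delicate point is confirming that the inverse returned by $G$-simplicity can be normalized to lie in $Z(S)^G$ rather than just in $S$, for which the linearity hypothesis and axiom (D2)/(D6) are precisely the tools provided.
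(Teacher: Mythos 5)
Your proposal is correct and follows essentially the same route as the paper's proof: form the ideal $Sz$, use the right (left) $Z(S)^G$-linearity of the maps $\tilde{\pi}^a_b$ to see it is $G$-invariant, invoke $G$-simplicity to produce an inverse $w$ of the central element $z$, and then apply $\tilde{\pi}^a_b$ to the relation $wz=1$ together with (D2) to force $w \in S^G$, hence $w \in Z(S)^G$. The only cosmetic difference is that the paper carries out your last step as the one-line computation $\tilde{\pi}^a_b(w) = \tilde{\pi}^a_b(w)zw = \tilde{\pi}^a_b(wz)w = \delta_{a,b}\,w$, which is precisely your ``multiply by $w$ and regroup using $z \in N(S)$'' argument.
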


\begin{proof}
We show the ''right'' case.
The ''left'' case is shown in an analogous fashion and is therefore left to the reader.
It is not difficult to see that $Z(S)^G$ is a commutative ring.
Take a non-zero $s \in Z(S)^G$.
Let $I = Ss$. Then $I$ is a non-zero ideal of $S$.
Take $a,b \in G$. Then 
$\tilde{\pi}_b^a(I) = \tilde{\pi}_b^a(Ss) = \tilde{\pi}_b^a(S)s \subseteq Ss = I$.
Therefore $I$ is $G$-invariant.
By $G$-simplicity of $S$, we get that there is some $t \in S$
such that $st = ts = 1$.
We know that $t \in Z(S)$.
It remains to show that $t \in S^G$.
Notice that $\tilde{\pi}_a^a(t) = \tilde{\pi}_a^a(t)st = \tilde{\pi}_a^a(ts)t =
\tilde{\pi}_a^a(1)t = 1t = t$,
and if $a \neq b$, then
$\tilde{\pi}_b^a(t) = \tilde{\pi}_b^a(t)st = \tilde{\pi}_b^a(ts)t = \tilde{\pi}_b^a(1)t = 0t = 0$.
\end{proof}

\begin{prop}\label{rightlinear}
If $a,b \in G$ and $\pi_b^a$ is right $R^G$-linear, then
$\tilde{\pi}_b^a$ is right $S^G$-linear.
\end{prop}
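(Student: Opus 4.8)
The plan is to verify the identity $\tilde{\pi}_b^a(uv) = \tilde{\pi}_b^a(u)\,v$ for all $u \in S$ and $v \in S^G$. Since both sides are additive in $u$ and in $v$, it suffices to check it when $u = r x^c$ for some $r \in R$, $c \in G$, and $v = s x^d$ for some $s \in R^G$, $d \in G$; here I use that, by Remark~\ref{rem:extendPI}, every element of $S^G$ is a finite sum of monomials $s x^d$ with $s \in R^G$.

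The first key step is to record how the multiplication rule \eqref{eq:multirule} simplifies when the right factor lies in $S^G$. Because $s \in R^G$ satisfies $\pi_g^c(s) = \delta_{c,g}\,s$ for all $g \in G$, the sum in \eqref{eq:multirule} collapses to the single surviving term $g = c$, giving $(r x^c)(s x^d) = \sum_{g \in G} r\,\pi_g^c(s)\,x^{gd} = (rs)\,x^{cd}$. This is the crucial observation: right multiplication by an element of $S^G$ behaves like ordinary monoid-ring multiplication, with no derivation corrections, and it is exactly what allows the single-index hypothesis on $\pi_b^a$ to propagate to $\tilde{\pi}_b^a$.

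With this in hand I would compute both sides directly. On one side, $\tilde{\pi}_b^a(uv) = \tilde{\pi}_b^a\big((rs)x^{cd}\big) = \pi_b^a(rs)\,x^{cd}$, and then, invoking the hypothesis that $\pi_b^a$ is right $R^G$-linear together with $s \in R^G$, I rewrite $\pi_b^a(rs) = \pi_b^a(r)\,s$. On the other side, $\tilde{\pi}_b^a(u) = \pi_b^a(r)\,x^c$, and applying the same collapsed product yields $\tilde{\pi}_b^a(u)\,v = \big(\pi_b^a(r)\,x^c\big)\big(s x^d\big) = \big(\pi_b^a(r)\,s\big)\,x^{cd}$. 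The two expressions coincide, which is the desired equality.

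Since the computation is short, I do not anticipate a serious obstacle; the two points requiring care are (i) confirming that the product collapse above is valid precisely because $s \in R^G$, this being the step where the defining property of $R^G$ does the work, and (ii) respecting the non-associativity of $R$, so that the products $\pi_b^a(r)\,s$ appearing on the two sides are grouped identically and no unjustified reassociation is invoked. Once these are observed, the equality of the two displayed monomials is immediate.
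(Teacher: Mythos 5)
Your proof is correct and follows essentially the same route as the paper's: both arguments reduce to monomials $r x^c$ and $s x^d$ with $s \in R^G$, collapse the product $(r x^c)(s x^d)$ to $(rs)x^{cd}$ using the defining property $\pi_g^c(s)=\delta_{c,g}\,s$ of $R^G$, apply right $R^G$-linearity of $\pi_b^a$, and re-expand on the other side. Your write-up merely makes explicit the additivity reduction and the non-associativity caveat that the paper leaves implicit.
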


\begin{proof}
If $a,b,d,g \in G$, $r \in R$ and $s \in R^G$,
then $\tilde{\pi}_b^a( r x^d \cdot s x^g ) = 
\tilde{\pi}_b^a ( \sum_{c \in G} r \pi_c^d(s) x^{cg} ) =
\tilde{\pi}_b^a ( r s x^{dg} ) = 
\pi_b^a (rs) x^{dg} =
\pi_b^a (r)s x^{dg} =
\sum_{c \in G} \pi_b^a(r) \pi_c^d (s) x^{cg} =
\pi_b^a(r) x^d \cdot s x^g =
\tilde{\pi}_b^a (r x^d) s x^g.$
\end{proof}

\begin{defn}
We say that $\pi$ is \emph{commutative} if
$\pi_b^a \circ \pi_d^c = \pi_d^c \circ \pi_b^a$,
for all
$a,b,c,d \in G$.
\end{defn}

\begin{prop}\label{leftlinear}
Take $a,b \in G$.
If $\pi_b^a : R \rightarrow R$ is left $R^G$-linear
and $\pi$ is commutative, then
$\tilde{\pi}_b^a$ is left $S^G$-linear.
\end{prop}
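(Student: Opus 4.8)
The plan is to mirror the proof of Proposition~\ref{rightlinear}, using additivity to reduce to a single pair of monomials and then carrying out the two multiplications explicitly. By additivity of $\tilde{\pi}_b^a$ and biadditivity of the multiplication, it suffices to verify $\tilde{\pi}_b^a(\sigma \cdot u) = \sigma \cdot \tilde{\pi}_b^a(u)$ on generators. Using the description $S^G = \sum_{g \in G} R^G x^g$ from Remark~\ref{rem:extendPI}, I would take $\sigma = s x^g$ with $s \in R^G$ and $g \in G$, and $u = r x^d$ with $r \in R$ and $d \in G$, where now $r$ is an \emph{arbitrary} element of $R$.

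First I would expand the left-hand side. By the multiplication rule \eqref{eq:multirule}, $\sigma \cdot u = \sum_{c \in G} s\, \pi_c^g(r)\, x^{cd}$, so the definition of $\tilde{\pi}_b^a$ gives $\tilde{\pi}_b^a(\sigma \cdot u) = \sum_{c \in G} \pi_b^a\bigl(s\, \pi_c^g(r)\bigr)\, x^{cd}$. Since $s \in R^G$ and $\pi_b^a$ is left $R^G$-linear, I can pull $s$ out of each summand to obtain $\sum_{c \in G} s\, \pi_b^a\bigl(\pi_c^g(r)\bigr)\, x^{cd}$. For the right-hand side, $\tilde{\pi}_b^a(u) = \pi_b^a(r)\, x^d$, and multiplying on the left by $\sigma$ yields $\sigma \cdot \tilde{\pi}_b^a(u) = \sum_{c \in G} s\, \pi_c^g\bigl(\pi_b^a(r)\bigr)\, x^{cd}$.

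Comparing the two expressions term by term, the identity reduces precisely to $\pi_b^a \circ \pi_c^g = \pi_c^g \circ \pi_b^a$ for every $c \in G$, which is exactly the commutativity hypothesis on $\pi$. This is the step I expect to be essential rather than merely routine: in the right-linear case of Proposition~\ref{rightlinear} the scalar sits on the \emph{right}, so the inner map is applied to the fixed element $s \in R^G$ and collapses via $\pi_c^d(s) = \delta_{c,d}\, s$, and no commutativity is needed. Here the scalar sits on the left, the inner map $\pi_c^g$ is applied to the arbitrary element $r$, and it does not collapse; commutativity of $\pi$ is precisely what lets $\pi_b^a$ be moved past $\pi_c^g$ so that the two sides agree. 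Note that the non-associativity of $R$ causes no trouble, since the argument only compares coefficients of matching monomials $x^{cd}$, each coefficient being a single product in $R$.
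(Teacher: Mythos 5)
Your proof is correct and follows essentially the same route as the paper's: reduce to monomials $s x^g$ and $r x^d$ with $s \in R^G$, pull $s$ out via left $R^G$-linearity, and use commutativity of $\pi$ to swap $\pi_b^a$ past $\pi_c^g$. Your added remark correctly identifies why commutativity is needed here but not in Proposition~\ref{rightlinear}.
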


\begin{proof}
If $d,e \in G$, $r \in R$ and $s \in R^G$,
then
$\tilde{\pi}_b^a(s x^e \cdot r x^d ) = 
\tilde{\pi}_b^a ( \sum_{c \in G} s \pi_c^e(r) x^{cd} ) =
\sum_{c \in G} \pi_b^a(s \pi_c^e(r)) x^{cd} =
\sum_{c \in G} s \pi_b^a(\pi_c^e(r)) x^{cd} =
\sum_{c \in G} s \pi_c^e(\pi_b^a(r)) x^{cd} =
s x^e \cdot \pi_b^a (r) x^d
=s x^e \cdot \tilde{\pi}_b^a (r x^d).$
\end{proof}

\begin{defn}\label{defrelation}
We say that $\pi$ is {\it well-ordered} if there is a well-order
$\preceq$ on $G$ with the property that if $a,b \in G$
satisfy $a \prec b$, then $\pi^a_b = 0$
(here, $a \prec b$ means that $a \preceq b$ and $a \neq b$).
In that case, we can define the {\it degree map} 
${\rm deg} : S \setminus \{ 0 \} \rightarrow G$ in the following way.
Take a non-zero element $s = \sum_{g \in G} r_g x^g \in S$.
Since ${\rm supp}(s) := \{ g \in G \mid r_g \neq 0 \}$ is finite,
${\rm supp}(s)$ has a greatest element with respect to $\preceq$.
That element will be denoted by ${\rm deg}(s)$.
\end{defn}

\begin{thm}\label{newmaintheorem}
Suppose that $S=R[G;\pi]$ is a differential monoid ring with $G$ commutative, and $\pi$ strong and well-ordered.
If (D8) holds (or (D7) holds and $\pi$ is commutative), then $S$ is $G$-simple if and only if
$R$ is $G$-simple and $Z(S)^G$ is a field.
\end{thm}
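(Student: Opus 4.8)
The plan is to prove the two implications separately, since the forward (``only if'') direction is essentially a repackaging of results already proved, whereas the reverse (``if'') direction carries the real content. For the forward direction, assume $S$ is $G$-simple. That $R$ is then $G$-simple is exactly Proposition~\ref{RGsimple}. To obtain that $Z(S)^G$ is a field I would invoke Proposition~\ref{ZSGfield}, whose hypothesis asks that each $\tilde{\pi}^a_b$ be right (respectively left) $Z(S)^G$-linear. This is precisely where the dichotomy in the statement is used: if (D8) holds, then each $\pi^a_b$ is right $R^G$-linear, so Proposition~\ref{rightlinear} gives right $S^G$-linearity of $\tilde{\pi}^a_b$; if instead (D7) holds and $\pi$ is commutative, then Proposition~\ref{leftlinear} gives left $S^G$-linearity. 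Since $Z(S)^G \subseteq S^G$, the corresponding one-sided $S^G$-linearity restricts to $Z(S)^G$-linearity, and the relevant case of Proposition~\ref{ZSGfield} applies.

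For the reverse direction, assume $R$ is $G$-simple and $Z(S)^G$ is a field, and let $J$ be a non-zero $G$-invariant ideal of $S$; the goal is to show $J=S$. Using that $\preceq$ is a well-order, let $d \in G$ be the least element occurring as $\deg(s)$ for some non-zero $s \in J$ (see Definition~\ref{defrelation}). I would first show that the set $\Lambda$ consisting of $0$ together with the coefficients of $x^d$ in the degree-$d$ elements of $J$ is a non-zero $G$-invariant ideal of $R$. Both the ideal property and the $\pi$-invariance follow from short leading-term computations: because $\pi^a_b = 0$ unless $b \preceq a$, multiplying an element of degree $d$ by an element of $R$ on either side preserves $\deg \preceq d$ and multiplies the leading coefficient accordingly (here (D6) gives $\pi^d_d = \id$), while applying $\tilde{\pi}^a_b$ replaces the leading coefficient $r_d$ by $\pi^a_b(r_d)$ and cannot raise the degree, so by minimality of $d$ the result lies again in $\Lambda$. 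By $G$-simplicity of $R$, $\Lambda = R$, so $1 \in \Lambda$, and hence there is a ``monic'' element $s = x^d + \sum_{g \prec d} r_g x^g \in J$ of minimal degree $d$.

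The heart of the argument is to show that this $s$ lies in $Z(S)^G$, and the mechanism is uniform: every commutator and associator of $s$ with elements of $R$ lies in $J$ and has degree strictly less than $d$, hence vanishes by minimality of $d$. Concretely, for $r,r' \in R$ the $x^d$-coefficients of $sr$ and $rs$ are both $r$, and the $x^d$-coefficients of the six products entering $(s,r,r')$, $(r,s,r')$ and $(r,r',s)$ all equal $rr'$; so $[s,r]$ and each associator of $s$ with $R$ have degree $\prec d$ and therefore are $0$, showing that $s$ commutes and associates with all of $R$. Next, applying $\tilde{\pi}^a_b$ with $a \neq b$ annihilates the $x^d$-term, since $\pi^a_b(1)=\delta_{a,b}=0$ by (D2), and leaves an element of $J$ of degree $\prec d$; minimality forces $\tilde{\pi}^a_b(s)=0$, which together with (D6) says that every coefficient of $s$ lies in $R^G$, i.e.\ $s \in S^G$. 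Since $\pi$ is strong, Corollary~\ref{corcenter} now yields $s \in Z(S)$, so $s \in Z(S)^G$. As $Z(S)^G$ is a field and $s \neq 0$, the element $s$ is invertible there, whence $1 = s^{-1}s \in J$ and $J=S$.

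The main obstacle is the reverse direction, and within it the careful bookkeeping of leading terms: one must check that, thanks to the well-ordering hypothesis on $\pi$, none of the products, commutators, or associators formed above can raise the degree, so that minimality of $d$ is applicable. In the non-associative setting this needs attention because one may not freely reassociate, and so the reduction from ``$s$ commutes and associates with $R$'' to ``$s \in Z(S)$'' is not automatic; here I would lean on the structural results of Section~\ref{Sec:Prel}, in particular Proposition~\ref{leftbracket} and Corollary~\ref{corcenter}, which are exactly what legitimize that passage, rather than attempting the verification by hand.
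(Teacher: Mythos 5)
Your proposal is correct and follows essentially the same route as the paper's own proof: the ``only if'' direction via Propositions~\ref{RGsimple}, \ref{ZSGfield}, \ref{rightlinear} and \ref{leftlinear} (with the (D8) versus (D7)-plus-commutativity dichotomy used exactly there), and the ``if'' direction by extracting a minimal-degree monic element $y$ of the ideal from the $G$-invariant leading-coefficient ideal of $R$, killing its commutators and associators with $R$ by minimality, and invoking Corollary~\ref{corcenter} to place $y$ in the field $Z(S)^G$. The only differences are presentational: you spell out steps the paper leaves terse, such as why the leading-coefficient set is also a right ideal and why $\tilde{\pi}^a_b$-invariance plus (D2), (D6) and minimality force $y \in S^G$.
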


\begin{proof}
The ''only if'' direction
follows from 
Proposition~\ref{RGsimple}, 
Proposition~\ref{ZSGfield},
Proposition~\ref{rightlinear} and 
Proposition~\ref{leftlinear}.
Now we show the ''if'' direction (using either assumption). 
Suppose that $R$ is $G$-simple and that $Z(S)^G$ is a field.
Take a non-zero $G$-invariant ideal $I$ of $S$.
Since $\preceq$ is a well-order we can define
the least degree $m$ of non-zero elements of $I$.
Define the non-empty subset $J$ of $R$ by saying that
$r \in J$ if there are $r_g \in R$, for $g \in G$,
with $g \prec m$, such that $r x^m + \sum_{g \prec m} r_g x^g \in I$.
It is clear that $J$ is a non-zero left ideal of $R$.
Using that $\pi$ is well-ordered and $S$ is a differential monoid ring,
it follows that $J$ is also a right ideal of $R$.
Since $I$ is $G$-invariant it follows that $J$ is $G$-invariant.
From $G$-simplicity of $R$ we get that $J = R$.
In particular, we get that there are $r_g \in R$, for $g \in G$
with $g \prec m$, such that
$y := x^m + \sum_{g \prec m} r_g x^g \in I$.
Since $I$ is $G$-invariant, the minimality of $m$
yields that $y \in S^G$.
Take $r \in R$. Put $z = ry - yr$. Then ${\rm deg}(z) \prec m$
which implies that $z = 0$, since $z \in I$.
Thus, $y$ commutes with all elements of $R$.
Next we show that $y \in N(S)$.
By Proposition~\ref{corcenter}, 
it is enough to show that 
$y$ associates with all elements of $R$.
Take $r,s \in R$. It is easy to see that 
the degrees of all the elements
$(y,r,s)$, $(r,y,s)$ and $(r,s,y)$ are less than $m$.
Hence, by minimality of $m$, we get that they are all zero.
Therefore, $y$ is a non-zero element in the field $Z(S)^G$.
This implies that $I = S$.
Thus, $S$ is $G$-simple. 
\end{proof}

\begin{exmp}
In this example, $R$ denotes a unital non-associative algebra over a field $F$
such that $F \subseteq R$ and $1=1_R \in F$.
Furthermore, $S = R[G ; \pi]$ denotes the corresponding differential monoid ring
where $\pi$ is assumed to be a unital $G$-derivation.

Suppose that $G = \{ 0 , g \}$ is the unique cyclic monoid with
two elements not forming a group. Furthermore, suppose that $\pi_0^g \neq 0$. Then $g + g = g$.
Notice that there is a natural well-order on $G$.
From (D2), (D4) and (D6) it follows that 
$\pi_0^0 = \pi_g^g =  {\rm id}_R$,
$\pi_0^g(1) = 0$,
$ \pi_0^g \circ \pi_0^g = \pi_0^g$, and  ${\rm char}(F)=2$.
The last conclusion follows from the equality 
\begin{displaymath}
	\id_R = \pi^g_g = \pi^{g+g}_g = \pi^g_g \circ \pi^g_0 + \pi^g_0 \circ \pi^g_g + \pi^g_g \circ \pi^g_g
\end{displaymath}
which simplifies to
$\id_R = 2 \pi_0^g +\id_R$.

Put $P = \pi_0^g$.
We can write $R = {\rm ker}(P) \oplus V$ for some $F$-vector subspace $V$ of $R$
and notice that $R^G = {\rm ker} ( P ) = F$.
Thus, it follows that $\pi_0^g$ is both right and left $R^G$-linear.
Hence $\pi$ is strong.
It is easy to check that $Z(S)^G = F$.
By Theorem~\ref{newmaintheorem},
$S$ is $G$-simple if and only if 
$R$ is $G$-simple.
It now becomes easy to construct
examples of simple non-associative differential monoid rings.
Indeed, let $Q : R \rightarrow F$
denote the projection.
If for any non-zero $r \in R$, there is $r' \in R$
such that $Q(rr') \neq 0$ or $Q(r'r) \neq 0$,
then $S$ is simple.
\end{exmp}

\section{The monoid $\mathbb{N}^{(I)}$}\label{Sec:Malm+Voskoglou}

Let $\mathbb{N}$ denote the set of non-negative integers.
If $m,n \in \mathbb{N}$ satisfy $m \geq n$, then we let 
${m \choose n}$ have its usual meaning. 
If $m,n \in \mathbb{N}$ satisfy $m < n$, then we put ${m \choose n} = 0$.
Throughout this section, let $I$ be a set which is well-ordered with respect to a relation $\ll$.
For a function $f : I \rightarrow \mathbb{N}$
we let ${\rm supp}(f)$ denote the support of $f$, i.e.
the set $\{ i \in I \mid f(i) \neq 0 \}$. 
Let $\mathbb{N}^{(I)}$ denote the set 
of functions $I \rightarrow \mathbb{N}$ with finite support.
Take $f,g \in \mathbb{N}^{(I)}$. Define $f + g \in \mathbb{N}^{(I)}$
from the relations $(f + g)(i) = f(i) + g(i)$, for $i \in I$.
With this operation $\mathbb{N}^{(I)}$ is a commutative monoid.
Furthermore, put ${f \choose g} = \prod_{i \in I} {f(i) \choose g(i)}$.
We define a partial order $\leq$ on $\mathbb{N}^{(I)}$ by saying 
that $f \leq g$ if, for all $i \in I$, the relation $f(i) \leq g(i)$ holds.
Notice that if $f \leq g$, then the pointwise subtraction $g-f$
belongs to $\mathbb{N}^{(I)}$.
Given $f \in \mathbb{N}^{(I)}$, with cardinality of ${\rm supp}(f)$
equal to $m \in \mathbb{N}$, we will often,
for simplicity of notation, assume that 
${\rm supp}(f) = \{ 1,\ldots,m \} \subseteq I$.

\begin{defn}
Suppose that for each $i \in I$,
$\delta_i : R \rightarrow R$ is an additive map
satisfying $\delta_i(1)=0$.
Put $\Delta = \{ \delta_i \}_{i \in I}$
and $R_{\Delta} = \cap_{i \in I} {\rm ker}(\delta_i)$.
An ideal $J$ of $R$ is said to be \emph{$\Delta$-invariant}
if for each $i \in I$, $\delta_i(J) \subseteq J$.
If $\{ 0 \}$ and $R$ are the only $\Delta$-invariant ideals of $R$,
then $R$ is said to be \emph{$\Delta$-simple}.
We say that $\Delta$ is \emph{commutative}
if for all $i,j \in I$, the relation 
$\delta_i \circ \delta_j = \delta_j \circ \delta_i$ holds.
Furthermore, $\Delta$ is said to be \emph{a set of left (right) kernel derivations 
on $R$} if for each $i \in I$, $\delta_i$
is left (right) $R_{\Delta}$-linear.
Take $f,g \in \mathbb{N}^{(I)}$ and $m \in \mathbb{N}$
such that the support of $f$ is contained in $\{ 1 , \ldots , m \}$.
Define $\delta^f : R \rightarrow R$ by 
$\delta^f(r) = ( \delta_1^{f(1)} \circ \cdots \circ \delta_m^{f(m)} )(r)$, for $r \in R$.
Furthermore
we define $\pi_g^f : R \rightarrow R$ by
$\pi_g^f (r) = {f \choose g} \delta^{f-g}(r)$, for $r \in R$.
Notice that if $g \nleq f$, then $f-g$ is not defined as an element of
$\mathbb{N}^{(I)}$. However, since, in that case, ${f \choose g}=0$,
the term $\pi_g^f(r)$ is supposed to be interpreted as $0$.
\end{defn}

\begin{prop}\label{prop:NIproperties}
With the above notation the following assertions hold:
\begin{itemize}

\item[(a)] $\pi$ satisfies (D0), (D1), (D2) and (D6);

\item[(b)] $\pi$ satisfies (D4) $\Leftrightarrow$ 
$\pi$ is commutative $\Leftrightarrow$ $\Delta$ is commutative;

\item[(c)] Suppose that $\Delta$ is commutative. Then, 
$\pi$ satisfies (D5) $\Leftrightarrow$
each $\delta_i$, $i \in I$, is a derivation on $R$;

\item[(d)] $R^{ \mathbb{N}^{(I)} } = R_{\Delta}$;

\item[(e)] $\pi$ satisfies (D7) (or (D8)) $\Leftrightarrow$
$\Delta$ is a set of left (or right) kernel derivations on $R$;

\item[(f)] Suppose that $\Delta$ is commutative. Then $R$
is $\Delta$-simple $\Leftrightarrow$ $R$ is $\mathbb{N}^{(I)}$-simple.
\end{itemize}
\end{prop}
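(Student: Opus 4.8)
The plan is to verify the six assertions of Proposition~\ref{prop:NIproperties} one at a time, since each is essentially a direct computation unwinding the definitions of $\pi_g^f$, $\delta^f$, and $R_\Delta$. I would treat $(a)$, $(d)$, $(e)$, and $(f)$ as short bookkeeping exercises and concentrate the real effort on $(b)$ and $(c)$, where the binomial coefficient identities enter.

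\textbf{The easy assertions.} For $(a)$, note that (D0) holds because $\pi_g^f = 0$ whenever $g \nleq f$, and for fixed $f$ there are only finitely many $g \leq f$; (D1) follows from $\pi_0^0 = \binom{0}{0}\delta^0 = \id_R$ and $\pi_a^0 = 0$ for $a \neq 0$ since $a \nleq 0$; (D2) follows from $\pi_g^f(1) = \binom{f}{g}\delta^{f-g}(1)$, which equals $\binom{f}{f} = 1$ when $g = f$ and vanishes otherwise because each $\delta_i(1) = 0$ kills $\delta^{f-g}(1)$ unless $f = g$; and (D6) is just $\pi_f^f = \binom{f}{f}\delta^0 = \id_R$. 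For $(d)$, I would unwind the definition of $R^{\mathbb{N}^{(I)}}$: an element $r$ lies in it iff $\pi_g^f(r) = \delta_{f,g}\, r$ for all $f,g$, and testing this against $f = g + \epsilon_i$ (the function exceeding $g$ by one at a single $i$) forces $\delta_i(r) = 0$ for every $i$, giving $R^{\mathbb{N}^{(I)}} \subseteq R_\Delta$; the reverse inclusion is immediate since $r \in R_\Delta$ kills every $\delta^{f-g}$ with $f \neq g$. For $(e)$, the content is that $\pi_g^f$ is left (right) $R_\Delta$-linear iff each $\delta_i$ is, which follows from $(d)$ together with the fact that $\pi_g^f$ is a scalar multiple of a composite of the $\delta_i$; I would note that one direction specializes to a single $\delta_i$ and the other builds $\pi_g^f$ up from the $\delta_i$. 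Assertion $(f)$ is the statement that $G$-invariance of an ideal coincides with $\Delta$-invariance once $G = \mathbb{N}^{(I)}$: by $(d)$ and the definition of $\pi_g^f$, an ideal closed under every $\delta_i$ is closed under every composite $\delta^{f-g}$ and hence under every $\pi_g^f$, and conversely $G$-invariance applied to $f = g + \epsilon_i$ recovers $\delta_i(J) \subseteq J$.

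\textbf{The main work, $(b)$ and $(c)$.} For $(b)$, the equivalence ``$\pi$ commutative $\Leftrightarrow$ $\Delta$ commutative'' is straightforward since the $\pi_g^f$ are scalar multiples of composites of the $\delta_i$, and these composites all commute precisely when the generators do. The substantive equivalence is (D4) $\Leftrightarrow$ $\Delta$ commutative. Assuming $\Delta$ commutative, I would expand both sides of (D4), namely $\pi_c^{a+b} = \sum_{d+e=c}\pi_d^a \circ \pi_e^b$, and reduce the required identity to the Vandermonde-type convolution $\binom{a+b}{c} = \sum_{d+e=c}\binom{a}{d}\binom{b}{e}$ applied coordinatewise over $I$, using that $\delta^{(a-d)+(b-e)} = \delta^{a-d}\circ\delta^{b-e}$ holds only because the $\delta_i$ commute. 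Conversely, specializing (D4) to $a = b = \epsilon_i$, $a = \epsilon_i, b = \epsilon_j$ should extract $\delta_i \circ \delta_j = \delta_j \circ \delta_i$. For $(c)$, under the standing hypothesis that $\Delta$ is commutative, I would expand (D5), namely $\pi_b^a(rs) = \sum_c \pi_c^a(r)\pi_b^c(s)$, and recognize it as a multivariable Leibniz rule; the forward direction specializes $a = \epsilon_i$, $b = 0$ to recover $\delta_i(rs) = \delta_i(r)s + r\delta_i(s)$, while the reverse direction is an induction on $a$ (in the well-order, or on $|a| = \sum_i a(i)$) using the single-variable Leibniz rule together with the already-established (D4).

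\textbf{Anticipated obstacle.} The main friction will be the binomial bookkeeping in $(b)$ and $(c)$: the coefficient $\binom{f}{g} = \prod_i \binom{f(i)}{g(i)}$ is a product over $I$, so the Vandermonde and Leibniz identities must be verified coordinatewise and then multiplied together, and one must be careful that the convention $\pi_g^f = 0$ for $g \nleq f$ makes the sums over $d+e=c$ and over intermediate $c$ finite and consistent with the vanishing of out-of-range binomial coefficients. I would keep the reduction to one coordinate explicit so that the classical single-variable identities do all the combinatorial work, and flag that the commutativity of $\Delta$ is exactly what is needed to factor $\delta^{a-d}\circ\delta^{b-e}$ as a single $\delta^{(a+b)-(d+e)}$; without it the composites do not collapse and (D4) genuinely fails.
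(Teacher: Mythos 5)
Your proposal is correct, and on the one substantive item, part (c), it takes a genuinely different route from the paper. For the converse direction of (c), the paper proceeds in three stages: it first proves the one-variable higher Leibniz rule \eqref{deltapowern} by induction on $n$, then verifies (D5) when ${\rm supp}(f)$ is a singleton using the trinomial revision identity $\binom{n}{m}\binom{n-m}{k-m}=\binom{n}{k}\binom{k}{m}$ (later generalized in Proposition~\ref{combinatorial}), and finally inducts on the \emph{cardinality of the support} of $f$, splitting $f=f'+\bar f$ and using the factorization $\pi_g^f=\pi_{g'}^{f'}\circ\pi_{\bar g}^{\bar f}$. You instead run a single induction on $|a|$, peeling off one indicator function $f_i$ at a time and using (D4) --- available from your part (b) under the standing commutativity hypothesis --- to recombine: from $\pi_b^{a'+f_i}=\sum_{d+e=b}\pi_d^{a'}\circ\pi_e^{f_i}$, (D5) for $f_i$ (which is exactly the derivation property of $\delta_i$), and the inductive hypothesis for $a'$, the resulting triple sum reassembles into $\sum_c\pi_c^a(r)\pi_b^c(s)$ by applying (D4) once more to $\pi_c^a$ and to $\pi_b^c$; I checked that this reassembly closes. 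Your route is arguably slicker: all binomial bookkeeping is absorbed into the Vandermonde convolution underlying (D4) (Proposition~\ref{vandermonde}), so the power rule and the trinomial revision identity are never needed explicitly; the paper's route, in exchange, produces the explicit formula \eqref{deltapowern}, which is classical and of independent use. Elsewhere your argument coincides with the paper's: the same specializations to the indicator functions $f_i$ drive (a), (d), (e) and (f), and the paper gives no details at all for (b), where your Vandermonde argument is the intended one.

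Two small points of precision. In (d) and (f), testing against $f=g+f_i$ with arbitrary $g$ yields $(g(i)+1)\,\delta_i(r)=0$, whose coefficient $\binom{g+f_i}{g}=g(i)+1$ can vanish in positive characteristic; take $g=0$ (as the paper does) so that the coefficient is $1$. In the converse of (b), the specialization $a=f_i$, $b=f_j$ with $i\ll j$ and $c=0$ is a tautology, since $\delta^{f_i+f_j}$ is by definition the composite in the canonical order $\delta_i\circ\delta_j$; to extract commutativity you must use the reversed order $a=f_j$, $b=f_i$, which gives $\delta_i\circ\delta_j=\pi_0^{f_i+f_j}=\pi_0^{f_j}\circ\pi_0^{f_i}=\delta_j\circ\delta_i$.
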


\begin{proof}
(a) and (b) follow immediately from the definition of $\pi$.

(c) Suppose that (D5) holds. Take $i \in I$ and define 
$f_i \in \mathbb{N}^{(I)}$ by the relations $f_i(j) = \delta_{i,j}$, for $j \in I$. 
For any $r,s\in R$ we get
$\delta_i (rs) = 
{f_i \choose 0} \delta^{f_i} (rs) = 
\pi_0^{f_i}(rs) = 
\pi_0^{f_i}(r) \pi_0^0(s) + \pi_{0}^{0}(r) \pi_0^{f_i}(s) =
\delta_i(r)s + r \delta_i(s)$,
which implies that $\delta_i$ is a derivation on $R$.

Now suppose that for each $i \in I$, $\delta_i$ is a derivation on $R$.
We first prove that, for any $i \in I$, $r,s \in R$ and $n\in \mathbb{N}$, the equation 
\begin{equation} \label{deltapowern} 
\delta_i^{n}(rs) = \sum_{k=0}^n \binom{n}{k} \delta^{n-k}(r) \delta^{k}(s)
\end{equation}
holds.
Take $i\in I$ and $r,s\in R$. Equation~\eqref{deltapowern} 
clearly holds if $n=0$ or $n=1$. We will prove the general case by induction. To this end, suppose that
Equation~\eqref{deltapowern}
holds for $n$. For clarity, we will write $\delta$ instead of $\delta_i$.
We now get 
\begin{align*}
\delta^{n+1}(rs) &= \delta(\delta^n(rs)) = \delta\left(\sum_{k=0}^n \binom n k \delta^{n-k}(r)\delta^k(s)\right) \\
&= \left(\sum_{k=0}^n \binom n k \left( \delta^{n+1-k}(r)\delta^k(s)+\delta^{n-k}(r)\delta^{k+1}(s)\right)\right) \\
&= \sum_{k=0}^n \binom n k \delta^{n+1-k}(r)\delta^k(s) +\sum_{k=0}^n \binom n k \delta^{n-k}(r)\delta^{k+1}(s) \\
&= \sum_{k=0}^n \binom n k \delta^{n+1-k}(r)\delta^k(s) +\sum_{k=1}^{n+1} \binom {n}{k-1} \delta^{n+1-k}(r)\delta^{k}(s) \\
&= \sum_{k=0}^{n+1} \left( \binom{n}{k} +\binom{n}{k-1}\right) \delta^{n+1-k}(r)\delta^k(s) =
\sum_{k=0}^{n+1} \binom{n+1}{k} \delta^{n+1-k}(r)\delta^k(s) 
\end{align*}
and by induction we conclude that Equation~\eqref{deltapowern} holds for any $n\in \N$.

We will first show (D5) in a special case. 
Suppose that $f(i) =n$, $g(i)=m \leq n$ and $f(j)=g(j) =0$ if $i \neq j$. Then we get
\begin{align*}
\pi_g^f(rs) &= \binom{n}{m} \delta^{n-m}(rs) = \binom n m \sum_{k=0}^{n-m} \binom{n-m}{k} \delta^{n-m-k}(r)\delta^k(s) \\
&=  \sum_{k=0}^{n-m} \binom n m \binom{n-m}{k} \delta^{n-m-k}(r)\delta^k(s) = \sum_{k=m}^n \binom n m \binom{n-m}{k-m} \delta^{n-k}(r) \delta^{k-m}(s) \\
&= \sum_{k=m}^n \binom n m \binom{n-m}{n-k} \delta^{n-k}(r) \delta^{k-m}(s) = \sum_{k=m}^n \binom{n}{n-k}\binom{k}{m} \delta^{n-k}(r) \delta^{k-m}(s) \\
&= \sum_{k=m}^n \binom{n}{k}\binom{k}{m} \delta^{n-k}(r) \delta^{k-m}(s)=\sum_{k=m}^n \binom{n}{k}\delta^{n-k}(r) \binom{k}{m} \delta^{k-m}(s)   =\sum_{ h \in \N^{(I)}} \pi_ h^f(r) \pi_g^h(s). 
\end{align*}
In the above calculation we have used an identity for binomial coefficients that will be generalized in Proposition~\ref{combinatorial}.

We have proved that (D5) holds for $\pi_g^f$ if the support of $f$ only contains one element. 
The general case can be proved by induction on the size of the support. 
To this end, suppose that we have proved (D5) if the functions involved 
have support of size at most $n$. 
Let $f$ be a function with a support of size $n+1$ and let $g\leq f$. 
We can write 
$f= f' +\bar{f}$ and $g= g'+\bar{g}$, where the support of $f'$ and $\bar{f}$ are disjoint, 
the support of $\bar{f}$ has size $n$, 
the support of $f'$ has size $1$ and $g' \leq f'$ and $\bar{g} \leq \bar{f}$. 
Then $\pi_g^f = \pi_{g'}^{f'} \circ \pi_{\bar{g}}^{\bar{f}}.$ By the induction hypothesis we have
$\pi_g^f(rs) = \pi_{g'}^{f'} \circ \pi_{\bar{g}}^{\bar{f}} (rs) = \pi_{g'}^{f'} \left( \sum_{\bar{h} } \pi_{\bar{h}}^{\bar{f}}(r) \pi_{\bar{g}}^{\bar{h}}(s) \right) = \sum_{  \bar{h} } \sum_{h'} \pi_{h'}^{f'}(\pi_{\bar{h}}^{\bar{f}}(r))\pi_{g'}^{h'}( \pi_{\bar{g}}^{\bar{h}}(s)) 
= \sum_{g \leq h'+\bar{h} \leq f} \pi_{h'}^{f'}(\pi_{\bar{h}}^{\bar{f}}(r))\pi_{g'}^{h'}( \pi_{\bar{g}}^{\bar{h}}(s))  \sum_{g \leq h\leq f} \pi_h^f(r)\pi_g^h(s) =\sum_{ h} \pi_h^f(r)\pi_g^h(s).$

(d) First we show the inclusion 
$R^{ \mathbb{N}^{(I)} } \supseteq R_{\Delta}$.
Take $r \in R_{\Delta}$. Then, for each $i \in I$,
the equality $\delta_i(r)=0$ holds.
Take $f,g \in \mathbb{N}^{(I)}$ such that $f \geq g$.
Then, from the definition of $\pi_g^f$, it follows that $\pi_g^f(r) = 0$.
Thus, $r \in R^{ \mathbb{N}^{(I)} }$.
Now we show the converse inclusion $R^{ \mathbb{N}^{(I)} } \subseteq R_{\Delta}$.
Take $r \in R^{ \mathbb{N}^{(I)} }$ and $i \in I$.
Define $f_i \in \mathbb{N}^{(I)}$ by the relations
$f_i(j) = 1$, if $j = i$, and $f_i(j) = 0$, otherwise.
Since $r \in  R^{ \mathbb{N}^{(I)} }$, we get, in particular,
that $0 = \pi_0^{f_i}(r) = {f_i\choose 0} \delta^{f_i}(r) = \delta_i(r)$.
Thus, $r \in {\rm ker}(\delta_i)$. Hence $r \in R_{\Delta}$.

(e) This follows immediately from (d).

(f) If $R$ is $\mathbb{N}^{(I)}$-simple, then clearly $R$ is also $\Delta$-simple.
Now suppose that $R$ is not $\mathbb{N}^{(I)}$-simple. We want to show that $R$ is not $\Delta$-simple.
Let $J$ be a non-zero proper $\mathbb{N}^{(I)}$-invariant ideal of $R$.
Take $i\in I$.
Define $f_i \in \mathbb{N}^{(I)}$ as in the proof of (c).
For any $r\in J$ we get
$\delta_i(r) = {f_i\choose 0} \delta^{f_i}(r) \in J$.
This shows that $J$ is $\Delta$-invariant and hence $R$ is not $\Delta$-simple.
\end{proof}

\begin{defn}
Now we will define a well-order $\preceq$ on $\mathbb{N}^{(I)}$
which extends $\leq$.
To this end, take $f,g \in \mathbb{N}^{(I)}$ with $f \neq g$ and
put $|f| = \sum_{i \in I} f(i)$.
Case 1: if $|f| > |g|$, then put $f \succ g$.
Case 2: if $|f| < |g|$, then put $f \prec g$.
Case 3: Suppose that $|f|=|g|$.
Then there is $j \in I$ such that 
$f(i)=g(i)$, for  $i \gg j$,
but $f(j) \neq g(j)$.
If $f(j) > g(j)$, then put $f \succ g$.
If $f(j) < g(j)$, then put $f \prec g$.
We will refer to $\preceq$ as the 
graded lexicographical ordering on $\mathbb{N}^{(I)}$.
\end{defn}

\begin{rem}
For each $i\in I$, the map $\delta_i : R \to R$
may be extended to an additive map $\tilde{\delta}_i : S \to S$
by defining $\tilde{\delta}_i ( \sum_{f\in \mathbb{N}^{(I)}} r_f x^f) = \sum_{f\in \mathbb{N}^{(I)}} \delta_i(r_f) x^f$.
\end{rem}

\begin{thm}\label{cornewtheorem}
If $\Delta$ is a commutative set of left (right) 
kernel derivations on $R$, then the 
differential monoid ring $S = R[ \mathbb{N}^{(I)} ; \pi]$
is simple if and only if 
$R$ is $\Delta$-simple and $Z(S)$ is a field.
\end{thm}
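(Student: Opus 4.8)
The plan is to deduce the statement from the main result, Theorem~\ref{newmaintheorem}, applied to $G = \mathbb{N}^{(I)}$, after bridging three gaps: between ordinary simplicity and $\mathbb{N}^{(I)}$-simplicity of $S$, between $\Delta$-simplicity and $\mathbb{N}^{(I)}$-simplicity of $R$, and between $Z(S)$ being a field and $Z(S)^{\mathbb{N}^{(I)}}$ being a field.

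First I would check that the hypotheses of Theorem~\ref{newmaintheorem} are met. The monoid $\mathbb{N}^{(I)}$ is commutative, and by Proposition~\ref{prop:NIproperties}(a),(b) the family $\pi$ satisfies (D0)--(D2), (D6), and, since $\Delta$ is commutative, also (D4); additivity (D3) is clear from the definition, so $S$ is a differential monoid ring. By Proposition~\ref{prop:NIproperties}(e), the hypothesis that $\Delta$ is a set of left (right) kernel derivations gives (D7) (respectively (D8)), so $\pi$ is strong; in the left case Proposition~\ref{prop:NIproperties}(b) additionally yields that $\pi$ is commutative, matching the two alternatives in Theorem~\ref{newmaintheorem}. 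Finally, with $\preceq$ the graded lexicographical order, I would observe that $a \prec b$ forces $b \nleq a$ in the pointwise partial order (if $b \leq a$ pointwise with $a \neq b$ then $|a| > |b|$, whence $a \succ b$), so that $\binom{a}{b} = 0$ and hence $\pi_b^a = 0$; thus $\pi$ is well-ordered. Theorem~\ref{newmaintheorem} then yields that $S$ is $\mathbb{N}^{(I)}$-simple if and only if $R$ is $\mathbb{N}^{(I)}$-simple and $Z(S)^{\mathbb{N}^{(I)}}$ is a field.

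The first bridge is the crux and the step I expect to be the main obstacle. For each $i \in I$ let $f_i \in \mathbb{N}^{(I)}$ be the indicator of $i$. Using the relation $x^{f_i} r = r x^{f_i} + \delta_i(r)$ (which follows from (D2), (D6) and the fact that $\binom{f_i}{c} = 0$ unless $c \in \{0, f_i\}$), a direct biadditive computation gives $[x^{f_i}, s] = \tilde{\delta}_i(s)$ for every $s \in S$; no associativity is needed, since only single binary products occur. Consequently every two-sided ideal of $S$ is $\tilde{\delta}_i$-invariant for all $i$. Since $\tilde{\pi}_g^f = \binom{f}{g}\, \tilde{\delta}_1^{(f-g)(1)} \circ \cdots \circ \tilde{\delta}_m^{(f-g)(m)}$ acts coefficientwise and is therefore an integer multiple of a composition of the $\tilde{\delta}_i$, every ideal is $\mathbb{N}^{(I)}$-invariant. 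Hence the $\mathbb{N}^{(I)}$-invariant ideals of $S$ are exactly all ideals, and $\mathbb{N}^{(I)}$-simplicity of $S$ coincides with simplicity of $S$.

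The remaining two bridges are then routine. Proposition~\ref{prop:NIproperties}(f), together with commutativity of $\Delta$, identifies $\Delta$-simplicity of $R$ with $\mathbb{N}^{(I)}$-simplicity of $R$. For the center, I would note that any $s \in Z(S)$ satisfies $\tilde{\delta}_i(s) = [x^{f_i}, s] = 0$ for all $i$, and then the same decomposition $\tilde{\pi}_g^f = \binom{f}{g}\,\tilde{\delta}_1^{(f-g)(1)} \circ \cdots \circ \tilde{\delta}_m^{(f-g)(m)}$ shows $\tilde{\pi}_g^f(s) = \delta_{f,g}\, s$: the case $f = g$ uses (D6), the case $f \neq g$ with $g \leq f$ kills $s$ through a positive power of some $\tilde{\delta}_i$, and the case $g \nleq f$ vanishes since $\binom{f}{g} = 0$. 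Thus $Z(S) \subseteq S^{\mathbb{N}^{(I)}}$, so $Z(S)^{\mathbb{N}^{(I)}} = Z(S)$ and the two field conditions agree. Chaining these equivalences with Theorem~\ref{newmaintheorem} completes the proof.
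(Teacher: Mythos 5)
Your proposal is correct and takes essentially the same route as the paper's proof: both reduce to Theorem~\ref{newmaintheorem} with the graded lexicographical order, use the commutator identity $x^{f_i}s - s\,x^{f_i} = \tilde{\delta}_i(s)$ to show that every ideal of $S$ is $\mathbb{N}^{(I)}$-invariant (so $\mathbb{N}^{(I)}$-simplicity coincides with simplicity) and that $Z(S)^{\mathbb{N}^{(I)}} = Z(S)$, and then invoke Proposition~\ref{prop:NIproperties}(f). Your explicit checks of well-orderedness of $\pi$ and of the remaining hypotheses of Theorem~\ref{newmaintheorem} simply spell out details the paper leaves implicit.
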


\begin{proof}
Put $G = \mathbb{N}^{(I)}$ and equip $G$ with 
the graded lexicographical ordering.
All ideals $J$ of $S$ are $G$-invariant.
Indeed, take $s = \sum_{f \in G} r_f x^f \in J$ and $i \in I$.
Then $J \ni x^{f_i} s - s x^{f_i} = 
\sum_{f \in G} \delta_i(r_f) x^f = \tilde{\delta}_i(s)$.
By induction, $\pi_g^f(s) \in J$ for all $f,g \in G$
with $f \geq g$.
This implies that $G$-simplicity of $S$ is 
equivalent to simplicity of $S$.
Also, $Z(S)^G = Z(S)$. In fact,
given $s = \sum_{f \in G} r_f x^f \in Z(S)$ and $i \in I$,
we have $0 = x^{f_i} s - s x^{f_i} = 
\sum_{f \in G} \delta_i(r_f) x^f = \tilde{\delta}_i(s)$.
The claim now follows immediately from Theorem~\ref{newmaintheorem}
and Proposition~\ref{prop:NIproperties}(f).
\end{proof}

\begin{rem}
Theorem \ref{cornewtheorem}
generalizes \cite[Theorem 4.15]{oinert2013}
both to the case of several variables and to 
the non-associative situation.
In fact, 
if $R$ is associative, $I$ is the finite set 
$\{ 1,\ldots,n \}$ and each $\delta_i$, for $i \in I$,
is a derivation on $R$, then $R[\mathbb{N}^{(I)} ; \pi]$ coincides with the
differential polynomial ring $R[x_1,\ldots,x_n ; \delta_1,\ldots,\delta_n]$
(see Section~\ref{sec:one}).
\end{rem}

Now we wish to proceed to prove non-associative generalizations
(see Theorem \ref{maintheorem0} and Theorem \ref{maintheoremp})
of results by Voskoglou \cite{voskoglou1985} and Malm \cite{malm1988}
for
differential polynomial rings 
in, possibly, infinitely many variables.
Therefore, for the rest of this section, we assume
that $\Delta$ is a commutative set of left (or right) kernel derivations on
a (possibly non-associative) ring $R$
and we put $S = R[\mathbb{N}^{(I)} ; \pi ]$. 
To this end, we first prove a few useful propositions.

\begin{prop}\label{vandermonde}
If $f,g,h \in \mathbb{N}^{(I)}$ are chosen 
so that $f = g + h$, then, for every $l \in \mathbb{N}^{(I)}$,
we get that
$ \sum_{p+q=l} {g \choose p} {h \choose q}  = {f \choose l}$.
\end{prop}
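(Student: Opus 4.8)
The plan is to deduce the identity from the classical (scalar) Vandermonde identity by applying it in each coordinate $i \in I$ and then multiplying the resulting factors. First I would unfold both sides using the definition $\binom{f}{l} = \prod_{i \in I} \binom{f(i)}{l(i)}$, and likewise
$$\binom{g}{p}\binom{h}{q} = \prod_{i \in I} \binom{g(i)}{p(i)} \binom{h(i)}{q(i)}$$
for $p,q \in \mathbb{N}^{(I)}$.

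The crucial structural observation is that specifying a decomposition $l = p + q$ with $p,q \in \mathbb{N}^{(I)}$ is exactly the same as specifying, independently for each $i \in I$, a decomposition $l(i) = p(i) + q(i)$ of non-negative integers. Since $0 \leq p,q \leq l$ and ${\rm supp}(l)$ is finite, both $p$ and $q$ automatically have finite support, so no elements outside $\mathbb{N}^{(I)}$ enter the sum; moreover, for every $i \notin {\rm supp}(l)$ the only admissible choice is $p(i) = q(i) = 0$, contributing the factor $\binom{g(i)}{0}\binom{h(i)}{0} = 1$. Hence the product over $I$ is in effect a finite product, and I would invoke distributivity of multiplication over addition (a product of finite sums equals the sum, over the Cartesian product of the per-coordinate index sets, of the products) to obtain
$$\sum_{p+q=l} \prod_{i \in I} \binom{g(i)}{p(i)}\binom{h(i)}{q(i)} = \prod_{i \in I} \left( \sum_{p_i + q_i = l(i)} \binom{g(i)}{p_i}\binom{h(i)}{q_i} \right).$$

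Finally I would apply the classical Vandermonde identity $\sum_{p_i+q_i=l(i)} \binom{g(i)}{p_i}\binom{h(i)}{q_i} = \binom{g(i)+h(i)}{l(i)}$ in each factor, using $f(i) = g(i) + h(i)$ and the convention $\binom{m}{n}=0$ for $m<n$ already adopted in this section, and then reassemble $\prod_{i \in I} \binom{f(i)}{l(i)} = \binom{f}{l}$. I expect the only genuinely delicate point to be the rigorous justification of interchanging the sum over $\mathbb{N}^{(I)}$-decompositions with the product over the (a priori infinite) index set $I$; the honest content is simply that only the finitely many coordinates in ${\rm supp}(l)$ are constrained nontrivially. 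To sidestep any worry about infinite products, an equally good route is induction on $|{\rm supp}(l)|$: peel off one coordinate of $l$, split the sum accordingly, and apply the scalar Vandermonde identity together with the inductive hypothesis, mirroring the finite-support argument already used in the proof of Proposition~\ref{prop:NIproperties}(c).
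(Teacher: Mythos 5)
Your proof is correct and follows the same route as the paper, which simply reduces the statement to the classical scalar Vandermonde identity; your write-up merely makes explicit the coordinatewise factorization $\binom{f}{l} = \prod_{i \in I}\binom{f(i)}{l(i)}$, the bijection between decompositions $l = p + q$ in $\mathbb{N}^{(I)}$ and independent decompositions $l(i) = p(i) + q(i)$, and the finiteness coming from ${\rm supp}(l)$, all of which the paper leaves implicit. No gap: the interchange of sum and product you flag is indeed harmless for exactly the reason you give, since all coordinates outside ${\rm supp}(l)$ contribute the factor $1$.
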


\begin{proof}
This follows from Vandermonde's identity:
$ \sum_{q+r=s} {b \choose q} {c \choose r}  = {a \choose s}$
which holds for all $a,b,c,q,r,s \in \mathbb{N}$.
\end{proof}

\begin{defn}
If $f \in \mathbb{N}^{(I)}$, then
we put 
$(-1)^f = (-1)^{|f|}$. 
\end{defn}

\begin{prop}\label{right}
If $r \in R$ and $f \in \mathbb{N}^{(I)}$, then
$r x^f = \sum_{g\leq f} (-1)^g {f \choose g} x^{f-g} \delta^g(r)$. 
\end{prop}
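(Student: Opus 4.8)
The plan is to verify the identity directly by expanding the right-hand side as an element of $S$ and comparing it, coefficient by coefficient, with $rx^f$. The essential input is the elementary commutation rule for moving a ring element across a power of $x$, which I would first extract from the multiplication rule \eqref{eq:multirule}: writing $x^a = 1\,x^a$ and $s = s\,x^0$ for $s \in R$ and $a \in \mathbb{N}^{(I)}$, one gets $x^a s = \sum_{c \le a} \binom{a}{c}\delta^{a-c}(s)\, x^c$, since $\pi^a_c(s) = \binom{a}{c}\delta^{a-c}(s)$, since $\pi^a_c = 0$ unless $c \le a$, and since $1$ is a left identity of $R$.

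Applying this rule to each summand $x^{f-g}\delta^g(r)$ gives $x^{f-g}\delta^g(r) = \sum_{c \le f-g}\binom{f-g}{c}\,\delta^{(f-g)-c}\!\big(\delta^g(r)\big)\, x^c$. Here I would use commutativity of $\Delta$ to collapse the composition, namely $\delta^{(f-g)-c}\circ \delta^g = \delta^{f-c}$; this is exactly where commutativity of $\Delta$ (available throughout the section) is needed, since $\delta^{(\cdot)}$ is defined through a fixed ordering of the $\delta_i$. Substituting and interchanging the two finite sums, the right-hand side becomes $\sum_{c}\Big(\sum_{g\,:\,g+c\le f}(-1)^{|g|}\binom{f}{g}\binom{f-g}{c}\Big)\delta^{f-c}(r)\,x^c$, so it remains to evaluate the bracketed integer coefficient of each $x^c$.

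For this I would invoke the trinomial-revision identity $\binom{f}{g}\binom{f-g}{c} = \binom{f}{c}\binom{f-c}{g}$, which holds componentwise over $I$ (both sides require $g+c \le f$), turning the inner sum into $\binom{f}{c}\sum_{g \le f-c}(-1)^{|g|}\binom{f-c}{g}$. Since $\binom{f-c}{g}$ and $(-1)^{|g|}$ both factor over $i \in I$, the alternating sum factors as $\prod_{i\in I}\sum_{k=0}^{(f-c)(i)}(-1)^k\binom{(f-c)(i)}{k} = \prod_{i\in I}(1-1)^{(f-c)(i)}$. This equals $1$ when $c=f$ and $0$ whenever $c \ne f$ (as then some factor $(1-1)^{(f-c)(i)}$ vanishes). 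Hence only the $c=f$ term survives, with coefficient $\binom{f}{f}\delta^{0}(r) = r$, yielding $rx^f$ as claimed.

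The computation is mechanical once set up, so the main obstacle is organizational rather than conceptual: keeping track of the multi-index binomial coefficients (all defined as products over $I$), justifying the interchange of summation (legitimate since every sum involved is finite), and correctly recombining the $\delta$-operators via commutativity of $\Delta$. Conceptually, the statement is just the inclusion--exclusion inversion of the commutation rule $x^a s = \sum_{d\le a}\binom{a}{d}\delta^d(s)x^{a-d}$ derived in the first step, which is precisely why the $(1-1)^n$ vanishing is what makes the alternating coefficients $(-1)^g\binom{f}{g}$ invert that rule.
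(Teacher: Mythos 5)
Your proof is correct, but it takes a genuinely different route from the paper's. The paper argues by induction on the cardinality of $\mathrm{supp}(f)$: the single-variable base case is imported from an earlier paper (\cite[Proposition 3.7]{nystedtoinertrichter}), and the induction step splits $f = g+h$ into pieces of smaller support, applies the hypothesis twice, and merges the resulting binomial coefficients via the multi-index Vandermonde identity (Proposition~\ref{vandermonde}). You instead verify the inversion directly: you expand each $x^{f-g}\delta^g(r)$ on the right-hand side using the commutation rule $x^a s = \sum_{c\le a}\binom{a}{c}\delta^{a-c}(s)x^c$ (which is immediate from \eqref{eq:multirule} and the definition $\pi^a_c(s)=\binom{a}{c}\delta^{a-c}(s)$), collapse compositions via commutativity of $\Delta$ --- which is indeed available here, since the proposition sits after the section's standing assumption that $\Delta$ is commutative, and which the paper's proof also uses in the step $(\delta^p\circ\delta^q)(r)=\delta^{p+q}(r)$ --- and then evaluate the coefficient of each $x^c$ using trinomial revision together with $\sum_{k=0}^{n}(-1)^k\binom{n}{k}=(1-1)^n$, so that only $c=f$ survives. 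Your trinomial-revision identity is exactly the paper's Proposition~\ref{combinatorial} (stated just after this proposition and used there for Proposition~\ref{less}), so you could simply cite it. What each approach buys: yours is self-contained --- it handles the single-variable case uniformly rather than outsourcing it, and replaces the induction by a one-shot computation whose integer coefficients are evaluated in $\mathbb{Z}$ before acting on $R$, so no characteristic issues arise --- at the cost of heavier multi-index bookkeeping; the paper's induction reuses existing machinery and keeps each step short, but depends on the external base case and on Vandermonde. Your closing observation that the formula is the inclusion--exclusion inverse of the commutation rule is also a cleaner conceptual explanation of where the signs $(-1)^g\binom{f}{g}$ come from than the paper offers.
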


\begin{proof}
Suppose that the cardinality of the support of $f$ is $n$. 
We will show the claim by induction over $n$.

Base case: $n=1$. This has already been proven in \cite[Proposition 3.7]{nystedtoinertrichter}.

Induction step: Suppose that $n > 1$ and that the claim holds for all
elements in $\mathbb{N}^{(I)}$ such that the cardinality of the support of 
the element is less than $n$.
Suppose that $f = g + h$, for some $g,h \in \mathbb{N}^{(I)}$,
is chosen so that the cardinalities of the supports of $g$ and $h$ are less than $n$.
From the induction hypothesis and Proposition~\ref{vandermonde}, 
we now get that
\begin{align*}
r x^f &= 
r x^{g+h} = 
(r x^g)x^h = 
\sum_{p \leq g} (-1)^p {g \choose p} x^{g-p} \delta^p(r) x^h \\
&= \sum_{p \leq g, \ q \leq h} (-1)^p (-1)^q {g \choose p} {h \choose q} 
x^{g-p} x^{h-q} (\delta^p \circ \delta^q)(r) \\
&=
\sum_{p \leq g, \ q \leq h} (-1)^{p+q} {g \choose p} {h \choose q} x^{f - (p+q)} \delta^{p+q}(r) \\
&= \sum_{l \leq f} \sum_{p+q=l} (-1)^l {g \choose p} {h \choose q} x^{f - l} \delta^l(r) =
\sum_{l \leq f} (-1)^l {f \choose l} x^{f - l} \delta^l(r).
\end{align*}
\end{proof}

\begin{prop}\label{combinatorial}
If $f,g,h \in \mathbb{N}^{(I)}$, then
${f \choose g} {f-g \choose h} = {f \choose h} {f-h \choose g}$.
\end{prop}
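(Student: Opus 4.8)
The plan is to reduce the multivariate identity to the single-variable case by exploiting the coordinate-wise factorization $\binom{f}{g} = \prod_{i \in I}\binom{f(i)}{g(i)}$ that is built into the definition of these binomial coefficients. First I would observe that it suffices to establish the scalar identity
\[
\binom{a}{b}\binom{a-b}{c} = \binom{a}{c}\binom{a-c}{b}
\]
for all $a,b,c \in \mathbb{N}$, using the convention $\binom{m}{n}=0$ whenever $m<n$ fixed at the start of this section. Indeed, in the principal case $g \leq f$ and $h \leq f$, both differences $f-g$ and $f-h$ are genuine elements of $\mathbb{N}^{(I)}$, and one obtains the claimed equality by multiplying the scalar identity over all $i \in I$ with $a = f(i)$, $b = g(i)$ and $c = h(i)$.

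For the scalar identity itself, I would argue that both sides count the same object. When $b + c \leq a$, each side equals the trinomial coefficient $\frac{a!}{b!\,c!\,(a-b-c)!}$: the left side counts the ways to first select a $b$-subset of an $a$-set and then a $c$-subset of the remaining $(a-b)$-set, while the right side performs the same selection with the two subsets interchanged. When $b + c > a$, both sides vanish, since each then contains a factor $\binom{m}{n}$ with $n>m$; as the condition $b+c>a$ is symmetric in $b$ and $c$, equality holds in every case.

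The only genuinely delicate point is the convention, recorded in the definition of $\pi_g^f$, that when $g \nleq f$ the symbol $f-g$ is undefined and the expression $\binom{f}{g}\binom{f-g}{h}$ is read as $0$. I would dispatch this as follows. If $g \nleq f$, pick $i_0 \in I$ with $g(i_0) > f(i_0)$; then the left-hand side is $0$ by convention, and on the right-hand side either $h \nleq f$ (so that side is also $0$ by convention) or else $h \leq f$, in which case the factor at $i_0$ is $\binom{f(i_0)-h(i_0)}{g(i_0)}$, which vanishes because $f(i_0)-h(i_0) \leq f(i_0) < g(i_0)$. The case $h \nleq f$ is handled symmetrically. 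Hence both sides vanish whenever $g \nleq f$ or $h \nleq f$, and they agree by the coordinate-wise scalar computation otherwise. I expect the main obstacle to be purely organizational, namely keeping the degenerate cases and the ``interpreted as $0$'' convention straight; the underlying combinatorics is a one-line multinomial identity.
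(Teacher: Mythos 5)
Your proof is correct and takes essentially the same approach as the paper, which disposes of the proposition in one line by reducing it coordinate-wise to the well-known scalar identity $\binom{a}{b}\binom{a-b}{c} = \binom{a}{c}\binom{a-c}{b}$. Your additional work—proving the scalar identity by a trinomial double count and carefully tracking the ``interpreted as $0$'' convention when $g \nleq f$ or $h \nleq f$—fills in details the paper leaves implicit but does not change the underlying argument.
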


\begin{proof}
This follows from the well known equality
${a \choose b} {a-b \choose c} = {a \choose c} {a-c \choose b}$
which holds for all $a,b,c \in \mathbb{N}$.
\end{proof}

\begin{prop}\label{less}
If $a \in Z(S)$, then, for each $g \in \mathbb{N}^{(I)}$, 
we get that $\sum_{f \geq g} x^{f-g} {f \choose g} a_f \in Z(S)$.
\end{prop}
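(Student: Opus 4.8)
The plan is to apply Corollary~\ref{corcenter}. Write $a = \sum_{f} a_f x^f$. I would first record that $a \in S^G$: as in the proof of Theorem~\ref{cornewtheorem}, for each $i \in I$ (with $f_i$ the indicator function of $i$) we have $0 = x^{f_i} a - a x^{f_i} = \sum_f \delta_i(a_f) x^f$, so $\delta_i(a_f) = 0$ for all $i$, whence $a_f \in R_\Delta = R^G$ by Proposition~\ref{prop:NIproperties}(d). Since each $a_f \in R^G$ commutes with every $x^h$ (Proposition~\ref{commute}), the element in question can be rewritten as
\[
b := \sum_{f \geq g} x^{f-g}\binom{f}{g} a_f = \sum_{f \geq g}\binom{f}{g}\, a_f x^{f-g} = \sum_{h}\binom{h+g}{g}\, a_{h+g}\, x^h,
\]
and, as its coefficients $\binom{h+g}{g} a_{h+g}$ lie in $R^G$, we get $b \in S^G$. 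By Corollary~\ref{corcenter} it then suffices to prove that $b$ commutes and associates with every element of $R$; concretely (cf. the proof of that corollary) that $[b,r]=0$ and $(b,r,s)=0$ for all $r,s \in R$, together with $(r,s,b)=0$ in case (D8), or $(r,b,s)=0$ in case (D7).

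The engine of the argument will be a coefficient bookkeeping made possible by Proposition~\ref{xanmnr}. For any $u = \sum_h u_h x^h \in S$ and $r \in R$, using that $x^h \in N_m(S) \cap N_r(S)$ to relocate the monomials and the commutation rule $x^h r = \sum_{l \leq h}\binom{h}{l}\delta^l(r)\, x^{h-l}$, one finds that the $x^c$-coefficient of $ur$ equals $\sum_{h \geq c}\binom{h}{c}\,(u_h\,\delta^{h-c}(r))$, while that of $ru$ equals $r u_c$. Iterating this (again pushing every $x$-power to the right past the $R$-entries via Proposition~\ref{xanmnr}) yields analogous closed expressions for the $x^e$-coefficients of $(ur)s$, $u(rs)$ and of the products entering $(r,b,s)$ and $(r,s,b)$, each an explicit double sum of terms $\binom{\cdot}{\cdot}\binom{\cdot}{\cdot}\,(\cdots)$ built from the $u_h$ and iterated $\delta$'s. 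Applying these formulas to $u = a$ and invoking $a \in Z(S)$ furnishes the \emph{transfer identities} to be fed in below: for every index the corresponding expression vanishes (resp., for the commutator, equals $r a_c$). For instance, $[a,r]=0$ reads $\sum_{f \geq c}\binom{f}{c}\,(a_f\,\delta^{f-c}(r)) = r a_c$ for all $c$ and all $r$.

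Finally I would substitute $u_h = b_h = \binom{h+g}{g} a_{h+g}$ into the same formulas and reindex by $f = h+g$. In each condition the binomial factors collapse by the elementary coordinatewise identities
\[
\binom{f-g}{c}\binom{f}{g} = \binom{g+c}{g}\binom{f}{g+c}, \qquad \binom{h}{e}\binom{h+g}{g} = \binom{g+e}{g}\binom{h+g}{g+e},
\]
and their companions (all of the same type as Proposition~\ref{combinatorial}), so that the $x^e$-coefficient of each condition for $b$ becomes exactly $\binom{g+e}{g}$ times the corresponding coefficient of the condition for $a$ at the shifted index $g+e$. By the transfer identities the latter vanishes (for the commutator it matches $r a_{g+e}$ against $r b_e = \binom{g+e}{g}\, r a_{g+e}$), so the former does too. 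Thus $b \in C(S) \cap N_l(S)$ by Proposition~\ref{bracket} and Proposition~\ref{leftbracket}, and, according to the active hypothesis, $b \in N_r(S)$ (under (D8)) or $b \in N_m(S)$ (under (D7)) by Proposition~\ref{parenthesis}; Lemma~\ref{intersection} then gives $b \in Z(S)$.

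The main obstacle will be the combinatorial bookkeeping of the third paragraph: one must check that, in all the required conditions at once, the extra factor $\binom{h+g}{g}$ coming from $b_h$ recombines with the binomials produced by moving $x$-powers to the right so as to yield precisely the overall factor $\binom{g+e}{g}$ and a faithful copy of the $a$-condition at index $g+e$. The two case hypotheses (D7)/(D8) enter only through Proposition~\ref{parenthesis}, to guarantee that verifying a single one of $(r,b,s)$, $(r,s,b)$ suffices, matching the corresponding clause of Lemma~\ref{intersection}; throughout, every repositioning of monomials uses only membership in $N_m(S)\cap N_r(S)$, which is exactly what Proposition~\ref{xanmnr} supplies.
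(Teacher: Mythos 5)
Your proposal is correct, and its skeleton is the paper's: extract $a_f\in R_\Delta=R^G$ from $[a,x^{f_i}]=0$, observe $b\in S^G$, reduce via Corollary~\ref{corcenter} (i.e.\ Propositions~\ref{bracket}, \ref{leftbracket}, \ref{parenthesis} and Lemma~\ref{intersection}, with strongness supplied by Proposition~\ref{prop:NIproperties}(e)) to checking commutation and association against $R$ only, and prove $[b,R]=0$ by precisely your transfer mechanism: the coefficientwise identity read off from $ar=ra$ is re-applied at the shifted index after a trinomial-revision collapse of binomials (the paper's equation~\eqref{identity} together with Proposition~\ref{combinatorial}). Two points of execution differ. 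First, you push powers rightwards via $x^hr=\sum_{l\le h}\binom{h}{l}\delta^l(r)x^{h-l}$, which is signless, whereas the paper writes $a=\sum_f x^fa_f$ and moves $r$ leftwards using Proposition~\ref{right}, producing the signs $(-1)^g$; these are mirror-image computations. Second, and more substantively, for the associators the paper does not redo the transfer computation: it argues per coefficient, noting $(r,s,a)=\sum_f(r,s,a_f)x^f$ and asserting likewise that $(a,R,R)=\{0\}$ forces $(a_f,R,R)=\{0\}$, so the same vanishing passes to the integer multiples $b_h=\binom{h+g}{g}a_{h+g}$ and hence to $b$, and it then gets $(R,b,R)=\{0\}$ for free from $[b,R]=\{0\}$ (quoting the proof of Proposition~2.1 of \cite{nystedtoinertrichter}) rather than splitting into the (D7)/(D8) cases as you do. Your wholesale transfer of the full coefficient identities — the $x^e$-coefficient of each $b$-condition equals $\binom{g+e}{g}$ times the $a$-condition at index $g+e$ — is heavier bookkeeping, but it does check out (I verified the binomial collapses for $(b,R,R)$, $(R,b,R)$ and $(R,R,b)$, all reducing to identities of the type $\binom{f}{g}\binom{f-g}{c}=\binom{g+c}{g}\binom{f}{g+c}$), and it buys a genuine robustness: you never need the per-coefficient statement $(a_f,R,R)=\{0\}$, which, unlike the right-slot case, does not drop out of a clean coefficientwise decomposition since the $\delta_i$ are not assumed to satisfy the Leibniz rule. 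In short: same approach and same key identities for the commuter, a more uniform (if more laborious) treatment than the paper's shortcut for the associators.
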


\begin{proof}
Put $b = \sum_{f \geq g} x^{f-g} {f \choose g} a_f$.
Using Proposition~\ref{prop:NIproperties}(e), it is enough to check the conditions in Corollary~\ref{corcenter}.
Since $a$ commutes with every $x^h$, for $h \in \mathbb{N}^{(I)}$,
we get that $a_f \in R_{\Delta}$, for $f \in \mathbb{N}^{(I)}$.
Next we show that $b$ commutes with 
every $r \in R$. 
Using that $a_f \in R_{\Delta}$, for $f \in \mathbb{N}^{(I)}$,
we can write $a = \sum_{f \in \mathbb{N}^{(I)} } x^f a_f$.
Since $a r = r a$, we may use Proposition~\ref{right} to conclude that
\begin{align*}
\sum_f x^f a_f r &= ar = ra =
\sum_f r x^f a_f = 
\sum_{f,g} (-1)^g {f \choose g} x^{f-g} \delta^g(r) a_f \\
&= \sum_{f,h} (-1)^{f-h} {f \choose f-h} x^h \delta^{f-h}(r) a_f,
\end{align*}
where we in the last sum have put $f-g=h$.
Hence, for each $h \in \mathbb{N}^{(I)}$, we get that
\begin{equation}\label{identity}
a_h r = \sum_f (-1)^{f-h} {f \choose f-h} \delta^{f-h}(r) a_f.
\end{equation}
Thus
$	r b = \sum_f r x^{f-g} {f \choose g} a_f = 
\sum_{f,h} x^{f-g-h} (-1)^h {f-g \choose h} \delta^h(r) {f \choose g} a_f.$
If we now put $v = f-h$ in the last sum and use 
Proposition~\ref{combinatorial}, then we get 
\begin{displaymath}
\sum_{f,v} x^{v-g} {f \choose g} {f-g \choose f-v}
(-1)^{f-v} \delta^{f-v}(r) a_f = 
\sum_{f,v} x^{v-g} {f \choose f-v} {v \choose g} 
(-1)^{f-v} \delta^{f-v}(r) a_f.	
\end{displaymath}
Finally, using \eqref{identity}, the last sum equals
$\sum_v x^{v-g} {v \choose g} a_v r = b r.$

To conclude the proof, we show that $b$ associates with all elements in $R$.
From the relations $(R,R,a) = \{ 0 \}$ and $(a,R,R) = \{ 0 \}$
it follows that for each $f \in \mathbb{N}^{(I)}$, 
$(R,R,a_f) = (a_f,R,R) =  \{ 0 \}$.
Hence we get that $(R,R,b_f) = (b_f,R,R) = \{ 0 \}$,
for $f \in \mathbb{N}^{(I)}$.
Thus, $(b,R,R) = (R,R,b) = \{ 0 \}$.
Since $[b,R] = \{ 0 \}$, we automatically get that $(R,b,R) = \{ 0 \}$
(see the proof of \cite[Proposition 2.1]{nystedtoinertrichter}).
\end{proof}

\begin{defn}
Take $a = \sum_{f \in \mathbb{N}^{(I)} } a_f x^f \in S$.
Recall from Definition~\ref{defrelation} that 
${\rm deg}(a)$ is the largest $f \in \mathbb{N}^{(I)}$,
with respect to $\preceq$, such that $a_f \neq 0$.
If $a$ is non-zero and $a_{{\rm deg}(a)} = 1$,
then we say that $a$ is \emph{monic}.
We say that $a$ is \emph{constant} if ${\rm deg}(a) = 0$.
We say that $a$ is \emph{linear} if $a$ is non-constant,
$a_0 = 0$ and the set ${\rm supp} ( {\rm deg}(a) )$
contains exactly one element.
Notice that if $f \in \mathbb{N}^{(I)}$ has ${\rm supp}(f) = \{ i \}$
for some $i \in I$, then, for every $r \in R$,
the relation $x^f r = \delta_i(r) + r x^f$ holds.  
\end{defn}

\begin{prop}\label{mainprop0}
Suppose that $R$ is $\Delta$-simple and that ${\rm char}(R)=0$.
Put $F=Z(R)_{\Delta}$.
The following assertions hold:
\begin{itemize}

\item[(a)] If $Z(S)$ only contains constants, then $Z(S)=F$.

\item[(b)] If $Z(S)$ contains non-constants,
then $Z(S)$ contains a unique,
up to addition by elements from $F$,
non-constant monic $a$
of least graded lexicographical degree.
In that case, $Z(S)$ is not a field and 
there is $c \in R_{\Delta}$, $m \in \mathbb{N}$
and $c_i \in F$, for $i \in \{ 1,\ldots,m-1\}$,
such that $a = x_m + \sum_{i = 1}^{m-1} c_i x_i - c$ and, hence,
$\delta_c = \delta_m + \sum_{i=0}^{m-1} c_i \delta_i$.
\end{itemize}
\end{prop}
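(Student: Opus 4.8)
The plan is to begin with a preliminary fact used in both parts: the degree-zero part of $Z(S)$ is exactly $F$. By Proposition~\ref{prop:NIproperties}(d),(e) the map $\pi$ is strong and $R^{\mathbb{N}^{(I)}} = R_\Delta$, so any $c \in F = Z(R) \cap R_\Delta$ lies in $S^G$ and commutes and associates with all of $R$; Corollary~\ref{corcenter} then gives $F \subseteq Z(S)$. Conversely, a constant element of $Z(S)$ lies in $R$, must commute with each $x^{f_i}$ (which via $x^{f_i}r - rx^{f_i} = \delta_i(r)$ forces it into $R_\Delta$) and must commute and associate with $R$ (forcing it into $Z(R)$), hence lies in $F$. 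Part (a) is then immediate: if $Z(S)$ consists only of constants, then $Z(S) \subseteq F \subseteq Z(S)$.

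For part (b) I would next show that $F$ is a field. Given $0 \neq c \in F$, the set $Rc$ is a nonzero two-sided ideal (using that $c$ is central and nuclear) and is $\Delta$-invariant because each $\delta_i$ is right $R_\Delta$-linear (in the left case one uses $cR$); by $\Delta$-simplicity $Rc = R$, so $c$ has an inverse in $R$. A short computation with the kernel-derivation property shows this inverse lies in $R_\Delta$, and left/right cancellability of the central nuclear unit $c$ shows the inverse is again central and nuclear, so it lies in $F$.

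The heart of the argument uses the ``formal derivative'' maps of Proposition~\ref{less}: for $a = \sum_f a_f x^f \in Z(S)$ and any $g$, the element $b_g := \sum_{f \geq g} x^{f-g} \binom{f}{g} a_f$ again lies in $Z(S)$. Taking $g = \deg(a) =: d$ collapses $b_d$ to the leading coefficient $a_d$, so $a_d$ is a constant central element and hence $a_d \in F$ by the preliminary fact. Since $F$ is a field, any non-constant central element of least degree can be rescaled to a monic one; and if $a, a'$ are two monic such elements, then $a - a'$ is central of strictly smaller degree, so by minimality it is constant, i.e. $a - a' \in F$, giving uniqueness up to $F$. To pin down the degree, I would apply Proposition~\ref{less} with $0 < g < d$: then $b_g$ is central of degree $d - g \prec d$ with leading coefficient $\binom{d}{g} \cdot 1$, which is nonzero because $\mathrm{char}(R) = 0$; minimality of $d$ forces $d - g = 0$, which is impossible for $0 < g < d$. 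Hence no such $g$ exists, and this occurs exactly when $d = f_m$ is a single generator. I expect this step --- combining Proposition~\ref{less}, the characteristic-zero nonvanishing of the binomials, and minimality to force the minimal degree to be linear --- to be the \emph{main obstacle}.

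It then remains to read off the shape of $a$. The only elements of $\mathbb{N}^{(I)}$ of graded-lexicographical degree below $f_m$ are $0$ and the $f_i$ with $i \ll m$, so $a = x_m + \sum_{i \ll m} c_i x_i - c$ where $x_i = x^{f_i}$, $c_i = a_{f_i} \in R_\Delta$ and $c = -a_0 \in R_\Delta$. Applying Proposition~\ref{less} with $g = f_i$ collapses $b_{f_i}$ to $c_i$, which is therefore a constant central element and so lies in $F$, while $c$ need only lie in $R_\Delta$. Finally, expanding $[a,r] = 0$ for $r \in R$ by means of $x^{f_i} r = \delta_i(r) + r x_i$ and the centrality of the $c_i$ yields $\delta_m(r) + \sum_{i \ll m} c_i \delta_i(r) = cr - rc$ for all $r$, i.e. the claimed identity $\delta_c = \delta_m + \sum_{i \ll m} c_i \delta_i$, where $\delta_c$ denotes the inner map $r \mapsto cr - rc$. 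That $Z(S)$ is not a field follows from a degree argument: since $\pi$ is well-ordered, the leading term of a product is the product of the leading terms, so the non-constant central element $a$ cannot be invertible in $S$.
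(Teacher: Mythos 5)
Your proof is correct and follows essentially the same route as the paper's: the maps $b_g$ from Proposition~\ref{less}, minimality of the graded lexicographical degree to force the coefficients $a_g$ (for $g>0$) into $F$, and the characteristic-zero nonvanishing of $\binom{d}{g}$ to pin down the leading degree, followed by reading off the linear shape and the identity $\delta_c=\delta_m+\sum_i c_i\delta_i$ from $[a,r]=0$. If anything, your write-up is slightly more complete than the paper's: you explicitly prove that $F$ is a field (implicitly needed to normalize $a$ to be monic), that $Z(S)$ is not a field (asserted but not argued in the paper's proof), and your formulation ``no $g$ with $0<g<d$ exists, hence $d=f_m$'' also excludes the case $\deg(a)=k f_m$ with $k\geq 2$, which the paper's phrasing ``${\rm supp}(\deg(a))$ contains more than one element'' technically overlooks even though the same binomial argument covers it.
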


\begin{proof}
(a) This is clear.

(b) Suppose that $Z(S)$ is not contained in $R$.
Let $a$ be non-constant in $Z(S)$
of least graded lexicographical degree $h \in \mathbb{N}^{(I)}$.
For each $g \in \mathbb{N}^{(I)}$, let $b_g = \sum_{f \geq g} x^{f-g} {f \choose g} a_f$.
From Proposition~\ref{less} and the definition of $a$ 
it follows that if $g > 0$, then $a_g = b_g \in Z(S)$.
Thus, if $g > 0$, then $a_g \in F$.
In particular, $a_h \in F \setminus \{ 0 \}$.
We can thus, from now on, assume that $a_h = 1$
so that $a$ is monic.
We claim that $a - a_0$ is linear.
If we assume that the claim holds,
then, by the definition of the graded lexicographical ordering, 
there is
$c \in R_\Delta$,
$m \in \mathbb{N}$ and $c_i \in F$, 
for $i \in \{ 1,\ldots,m-1 \}$, such that 
$a = x_m + \sum_{i = 1}^{m-1} c_i x_i - c$.
From the relations $ar = ra$, for $r \in R$,
it follows that $\delta_c = \delta_m + \sum_{i=0}^{m-1} c_i \delta_i$.
Now we show the claim.
Seeking a contradiction, 
suppose that ${\rm supp} ( {\rm deg}(a) )$
contains more than one element.
Then there is $g \in \mathbb{N}^{(I)}$ such that $h > g > 0$.
Since ${\rm char}(R)=0$, we get that ${h \choose g} \in F \setminus \{ 0 \}$
and thus $0 < {\rm deg}(b_g) < {\rm deg}(a)$
which contradicts the choice of $a$.
\end{proof}

\begin{thm}\label{maintheorem0}
If ${\rm char}(R)=0$ and we put $F = Z(R)_{\Delta}$, then $S$ is simple
if and only if $R$ is $\Delta$-simple and no
non-trivial finite $F$-linear combination of 
elements from $\Delta$
is an inner derivation on $R$
defined by an element from $R_{\Delta}$.
\end{thm}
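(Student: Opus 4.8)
**The plan is to derive Theorem~\ref{maintheorem0} from Theorem~\ref{cornewtheorem} together with the structural description of $Z(S)$ provided by Proposition~\ref{mainprop0}.** By Theorem~\ref{cornewtheorem}, since $\Delta$ is a commutative set of left (or right) kernel derivations, $S$ is simple if and only if $R$ is $\Delta$-simple and $Z(S)$ is a field. Thus the whole task reduces to showing: \emph{under the standing hypothesis that $R$ is $\Delta$-simple and ${\rm char}(R)=0$, the center $Z(S)$ is a field if and only if no non-trivial finite $F$-linear combination of elements of $\Delta$ is an inner derivation $\delta_c$ with $c \in R_\Delta$.} The strategy is therefore to prove this last equivalence and then paste the two pieces together. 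Note that $\Delta$-simplicity is a necessary condition for simplicity (it appears on both sides of the desired equivalence via Theorem~\ref{cornewtheorem}), so I may assume it throughout the analysis of $Z(S)$.

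First I would set up the two directions using the dichotomy from Proposition~\ref{mainprop0}. That proposition says that either $Z(S)$ consists only of constants, in which case $Z(S) = F$ (a field, as $F = Z(R)_\Delta$ and $R$ is $\Delta$-simple forces $F$ to be a field by a standard argument), or $Z(S)$ contains a non-constant element, in which case $Z(S)$ is \emph{not} a field and moreover there is a distinguished monic linear element
\begin{displaymath}
a = x_m + \sum_{i=1}^{m-1} c_i x_i - c,
\end{displaymath}
with $c \in R_\Delta$, $c_i \in F$, yielding the relation $\delta_c = \delta_m + \sum_{i=0}^{m-1} c_i \delta_i$. So $Z(S)$ fails to be a field precisely when $Z(S)$ contains a non-constant, and that occurrence is equivalent to the existence of such an $a$, which in turn produces a non-trivial $F$-linear combination of the $\delta_i$ equal to an inner derivation $\delta_c$ defined by $c \in R_\Delta$. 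This gives the forward implication of the equivalence almost immediately: if $Z(S)$ is not a field, Proposition~\ref{mainprop0}(b) manufactures the forbidden inner combination.

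For the converse direction I would argue contrapositively: suppose some non-trivial finite $F$-linear combination $\delta_c := \sum_i c_i \delta_i$ (with at least one $c_i \neq 0$) equals an inner derivation $\delta_a(r) = ar - ra$ for some $a \in R_\Delta$ — here I must be careful to reconcile the inner derivation $\delta_c(r) = cr - rc$ convention with the combination, reading off $c \in R_\Delta$ from the hypothesis. From such a relation I would \emph{construct} an explicit non-constant element of $Z(S)$, namely a linear element of the shape $x_m + \sum_{i} c_i x_i - c$ (reindexing so the top variable is $x_m$), and verify directly that it commutes and associates with all of $R$, hence lies in $Z(S)$ by Corollary~\ref{corcenter} and Proposition~\ref{prop:NIproperties}(e). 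The commutation check uses that for $f$ with ${\rm supp}(f)=\{i\}$ one has $x^f r = \delta_i(r) + r x^f$, so that $x_m + \sum c_i x_i$ applied as a commutator against $r$ yields exactly $\sum c_i \delta_i(r) + \delta_m(r)$, which the $-c$ term cancels precisely because $\delta_c$ matches that combination. Producing such a non-constant central element shows $Z(S)$ is not a field, completing the contrapositive.

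\textbf{The main obstacle} I anticipate is the bookkeeping in the converse construction: matching the indexing conventions (the combination in the hypothesis ranges over all of $\Delta$, whereas the element $a$ in Proposition~\ref{mainprop0}(b) singles out a top variable $x_m$ and lower terms $c_i x_i$ with $c_i \in F$), and confirming that the coefficients $c_i$ appearing there genuinely lie in the fixed field $F = Z(R)_\Delta$ rather than merely in $R_\Delta$ — this is what licenses calling the combination \emph{$F$-linear}. I would lean on the fact, extracted in the proof of Proposition~\ref{mainprop0}, that the sub-leading coefficients $b_g$ of a minimal-degree central element are themselves central, forcing $c_i \in F$. The associativity verification for the constructed central element is routine given Proposition~\ref{less} and the nuclear propositions of Section~\ref{Sec:Prel}, so the genuine content is entirely in the commutator computation and the translation between ``$Z(S)$ not a field'' and ``an inner-derivation relation among the $\delta_i$ with $F$-coefficients.''
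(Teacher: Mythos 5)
Your proposal takes essentially the same route as the paper's own proof: the paper likewise reduces everything to Theorem~\ref{cornewtheorem}, obtains the ``if'' direction from Proposition~\ref{mainprop0}, and for the ``only if'' direction constructs exactly the element $p = c_1 x_1 + \cdots + c_n x_n - c$ with $c \in R_{\Delta}$ (without your monic normalization, which is harmless since $\Delta$-simplicity makes $F$ a field) and derives a contradiction with $Z(S)$ being a field via Proposition~\ref{mainprop0}. If anything you are slightly more careful than the paper, which only checks that $p$ commutes with $R$ and the $x_i$ and leaves implicit both the associativity verification via Corollary~\ref{corcenter} and the argument that $F = Z(R)_{\Delta}$ is a field in the constants-only case.
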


\begin{proof}
The ''if'' statement follows from Theorem~\ref{cornewtheorem}
and Proposition~\ref{mainprop0}.
Now we show the ''only if'' statement.
Suppose that $S$ is simple.
By Theorem~\ref{cornewtheorem}, $R$ is $\Delta$-simple
and $Z(S)$ is a field.
Suppose that there are $c_1,\ldots,c_n \in F$,
not all of them equal to zero, such that
$c_1 \delta_1 + \ldots + c_n \delta_n = \delta_c$.
Take $c \in R_{\Delta}$ and consider the element $p = c_1 x_1 + \ldots c_n x_n - c$.
Then it is clear that $p x_i = x_i p $ for all $i \in I$.
Now take $r \in R$. Then 
$p r = r p + rc +  c_1 \delta_1(r) + \ldots + c_n \delta_n(r) - cr = 
r p + c_1 \delta_1(r) + \ldots + c_n \delta_n(r) + rc - cr =
r p + c_1 \delta_1(r) + \ldots + c_n \delta_n(r) - \delta_c(r) = r p$.
Therefore $p \in Z(S)$. But, by Proposition~\ref{mainprop0}, this is a contradiction since $Z(S)$ is
a field and $p$ is non-constant.
\end{proof}

In the proof of the next proposition,
we will use the following notation.
Let $N = \{ -\infty\} \cup \mathbb{N}$
and let $p$ be a prime number.
We will formally write $p^{-\infty} = 0$.
Let $N^{(I)}$ denote the set of 
functions $f : I \rightarrow N$ with the property
that $f(i) = -\infty$ for all but finitely 
many $i \in I$.
Given $f \in N^{(I)}$, let $p^f \in \mathbb{N}^{(I)}$
be defined by $(p^f)(i) = p^{f(i)}$, for $i \in I$. 

\begin{prop}\label{mainpropp}
Suppose that $R$ is $\Delta$-simple and that ${\rm char}(R)=p$.
Put $F = Z(R)_{\Delta}$.
The following assertions hold:
\begin{itemize}

\item[(a)] If $Z(S)$ only contains constants, then $Z(S)=F$.

\item[(b)] If $Z(S)$ contains non-constants,
then $Z(S)$ contains a unique,
up to addition by elements from $F$,
non-constant monic $a$
of least graded lexicographical degree.
In that case,
if the maps $\delta_i^{p^j}$, for $i\in I$ and $j \in \N$,
are $F$-linearly independent,
then there is
$c \in R_\Delta$,
$m,n \in \mathbb{N}$
and $c_{ij} \in F$, for $(i,j) \in \{1,\ldots,m\} \times \{0,\ldots,n\}$,
such that $c_{mn}=1$ and
$a = \sum_{i=1}^m \sum_{j=0}^n c_{ij} x_i^{p^j} - c$.
Thus, $\delta_c = \sum_{i=1}^m \sum_{j=0}^n c_{ij} \delta_i^{p^j}$.
\end{itemize}
\end{prop}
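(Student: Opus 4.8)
The plan is to mirror the structure of the characteristic-zero argument in Proposition~\ref{mainprop0}, but to replace the crucial vanishing-of-binomial-coefficients step with a $p$-adic analysis of which coefficients $\binom{h}{g}$ survive modulo $p$. Part (a) is immediate, exactly as before. For part (b), I would first extract the non-constant element $a \in Z(S)$ of least graded lexicographical degree $h$, and repeat the opening moves of Proposition~\ref{mainprop0}(b): using the auxiliary elements $b_g = \sum_{f \geq g} x^{f-g}\binom{f}{g}a_f$ from Proposition~\ref{less}, together with the minimality of $\deg(a)$, I conclude that $a_g \in F$ whenever $g > 0$, that $a_h \in F \setminus\{0\}$, and hence (after scaling) that $a$ is monic. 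The substance of the proof is then to determine the shape of the support of $h = \deg(a)$.

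The key difference from characteristic zero is that $b_g \in Z(S)$ has degree $h - g$ (and is nonzero) precisely when $\binom{h}{g} \not\equiv 0 \pmod p$, so minimality of $a$ forces $\binom{h}{g} = 0$ in $R$ for \emph{every} $g$ with $0 < g < h$. By Lucas' theorem, $\binom{h}{g} \not\equiv 0 \pmod p$ exactly when each base-$p$ digit of $g(i)$ is $\leq$ the corresponding digit of $h(i)$; hence the constraint $\binom{h}{g} \equiv 0$ for all intermediate $g$ forces every value $h(i)$ to be a power of $p$ (so that no nontrivial digit-wise-dominated $g$ exists strictly between $0$ and $h$ within a single coordinate), and simultaneously forces the support of $h$ to behave so that no cross-coordinate intermediate $g$ survives. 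Carrying out this digit analysis carefully is, I expect, the main obstacle: one must rule out all $g$ with $h-g$ of strictly smaller degree, not merely those supported on one variable. The upshot should be that $h = p^{\hat f}$ for some $\hat f \in N^{(I)}$, i.e. $h$ is concentrated in ``pure $p$-power'' monomials $x_i^{p^j}$.

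With the degree structure pinned down, I would argue that $a - a_0$ is an $F$-linear combination of the monomials $x_i^{p^j}$, exploiting that the maps $\delta_i^{p^j}$ are $F$-linearly independent (the hypothesis) to identify the coefficients $c_{ij} \in F$ unambiguously, with $c_{mn} = 1$ by monicity. Writing $c = -a_0 \in R_\Delta$ (that $a_0 \in R_\Delta$ follows because $a$ commutes with all $x^{f_i}$, forcing $\delta_i(a_0) = 0$), this yields $a = \sum_{i=1}^m \sum_{j=0}^n c_{ij} x_i^{p^j} - c$. Finally, translating the central relations $ar = ra$ for $r \in R$ through the commutation rule $x_i^{p^j} r = \delta_i^{p^j}(r) + r x_i^{p^j}$ (a consequence of $\delta_i^p$ being additive and the Freshman's-dream identity $\delta_i^{p^j}$ acting as a ``$p^j$-th power derivation'' in characteristic $p$) gives $\delta_c = \sum_{i,j} c_{ij}\,\delta_i^{p^j}$, completing the proof. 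The $F$-linear independence hypothesis is exactly what guarantees the representation of $a$, and hence of $\delta_c$, is well defined.
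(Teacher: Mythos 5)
Your opening is faithful to the paper's own proof: part (a), the extraction of the non-constant $a \in Z(S)$ of least graded lexicographical degree $h$, the auxiliary elements $b_g = \sum_{f \geq g} x^{f-g}\binom{f}{g}a_f$ from Proposition~\ref{less}, the conclusion $a_g \in F$ for $g > 0$, and the normalization to a monic $a$ all match. The genuine gap sits exactly at the step you flag as ``the main obstacle'': you never actually rule out mixed monomials, and the upshot you assert does not rule them out either. Indeed, $h = p^{\hat f}$ with $\hat f \in N^{(I)}$ is \emph{not} the same as ``$h$ is concentrated in pure $p$-power monomials $x_i^{p^j}$'': an exponent with $h(1) = p$ and $h(2) = p^2$, i.e.\ the monomial $x_1^{p}x_2^{p^2}$, has every coordinate a power of $p$ yet is supported on two variables, so your ``i.e.''\ is a non sequitur. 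You also analyze only the leading exponent $h$, whereas the conclusion $a = \sum_{i,j} c_{ij} x_i^{p^j} - c$ requires \emph{every} $f$ in the support of $a$ to have this shape; for that one must observe that minimality forces each $b_g$ ($g > 0$) to be entirely constant (a non-constant $b_g$ has total degree at most $|h| - |g| < |h|$, hence degree $\prec h$), which gives $\binom{f}{g} a_f = 0$ for all $f > g$, not merely the vanishing of the top coefficient $\binom{h}{g}$.

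This is also where you misplace the hypothesis. The $F$-linear independence of the maps $\delta_i^{p^j}$ is not needed to ``identify the coefficients $c_{ij}$ unambiguously'' --- the coefficients of $a$ in the monomial basis $\{x^f\}$ are unique by the very definition of $S$, with no hypothesis at all. In the paper's proof the independence hypothesis is used precisely to kill the mixed monomials: having shown via Proposition~\ref{less} and Lucas' theorem that every exponent in the support of $a$ has the form $p^t$ with $t \in N^{(I)}$, the paper takes the \emph{least} more-than-singly-supported such $t$, call it $s$, and extracts the coefficient of $x^{p^{s_{-\infty}}}$ from the relation $ar - ra = 0$; this produces a non-trivial $F$-linear relation among the $\delta_i^{p^j}$, contradicting independence. (If you prefer to stay inside your Lucas framework, the constraint $\binom{f}{g} \equiv 0 \pmod{p}$ for all $0 < g < f$ can in fact be made to do the work, but only by choosing cross-coordinate $g$: for $f$ supported on at least two variables, take $g$ to be $f$ restricted to one variable of its support, so that $\binom{f}{g} = 1 \not\equiv 0$ and hence $a_f = 0$ --- a step your sketch does not contain.) Your final translation of $ar = ra$ into $\delta_c = \sum_{i,j} c_{ij}\delta_i^{p^j}$ via $x_i^{p^j} r = \delta_i^{p^j}(r) + r x_i^{p^j}$ is fine (it follows from $\binom{p^j}{k} \equiv 0 \pmod{p}$ for $0 < k < p^j$), but as written the proposal asserts a support structure that its digit analysis does not deliver and uses the independence hypothesis only where it is vacuous, so the core of part (b) remains unproved.
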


\begin{proof}
(a) This is clear.

(b) Suppose that $Z(S)$ is not contained in $R$.
Let $a$ be a non-constant polynomial in $Z(S)$
of least graded lexicographical degree $h \in \mathbb{N}^{(I)}$.
For each $g \in \mathbb{N}^{(I)}$, let $b_g = \sum_{f\geq g} x^{f-g} {f \choose g} a_f$.
From Proposition~\ref{less} and the definition of $a$ 
it follows that if $g > 0$, then $a_g = b_g \in Z(S)$.
Thus, if $g > 0$, then $a_g \in F$.
In particular, $a_h \in F \setminus \{ 0 \}$.
We can thus, from now on, assume that $a_h = 1$
so that $a$ is monic.
Take $f \in \mathbb{N}^{(I)}$ such that $a_f \neq 0$.
From Proposition~\ref{less} and Lucas' theorem
it follows that $f = p^t$ for some $t \in N^{(I)}$.
We say that $t$ is {\it more than singly supported}
if there are $i,j \in I$, with $i < j$, such that 
$t(i) \neq -\infty \neq t(j)$.
Seeking a contradiction, suppose that the set
$Z = \{ t \in N^{(I)} \mid  \mbox{$t$ is more than singly supported and
$a_{p^{t}} \neq 0$ } \}$ is non-empty.
To this end, let $s$ denote the unique element from $Z$ 
of least graded lexicographical degree.
Thus, there are $i,j \in I$, with $i < j$,
such that $s(i) \neq -\infty \neq s(j)$ and
$s(k) = -\infty$, for $k > j$.
Given $n \in N$,
define $s_n \in N^{(I)}$ by the relations
$s_n(l) = s(l)$, for $l < j$, and $s_n(j) = n$, otherwise.
In the relation $ar-ra = 0$, considering the coefficient of 
$x^{p^{s_{-\infty}}}$ yields the relation
$	\sum_{n=0}^j
a_{ p^{s_n} }
\delta_n^{ p^{s_n(j)} } = 0$
which is a contradiction since $a_{ p^{s_j} } \ne 0$.
Hence $Z = \emptyset$ and the desired result follows.
\end{proof}

\begin{thm}\label{maintheoremp}
If ${\rm char}(R)=p$ and we put $F = Z(R)_{\Delta}$, then 
$S$ is simple if and only if $R$ is $\Delta$-simple and no
non-trivial finite $F$-linear combination of 
$\delta_i^{p^j}$, for $i \in I$ and $j \in \mathbb{N}$, 
is an inner derivation on $R$ defined by an
element from $R_{\Delta}$.
\end{thm}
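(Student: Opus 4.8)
The plan is to mirror the structure of the characteristic-zero result Theorem~\ref{maintheorem0}, using Theorem~\ref{cornewtheorem} as the engine that reduces simplicity of $S$ to the twin conditions ``$R$ is $\Delta$-simple'' and ``$Z(S)$ is a field,'' and then using Proposition~\ref{mainpropp} to translate the failure of ``$Z(S)$ is a field'' into the existence of an $F$-linear combination of the maps $\delta_i^{p^j}$ that is an inner derivation defined by an element of $R_\Delta$. The two directions are handled separately, just as in the proof of Theorem~\ref{maintheorem0}.

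For the \emph{if} direction, I assume $R$ is $\Delta$-simple and that no non-trivial finite $F$-linear combination of the $\delta_i^{p^j}$ equals $\delta_c$ for some $c \in R_\Delta$. By Theorem~\ref{cornewtheorem} it suffices to show $Z(S)$ is a field. If $Z(S)$ contained a non-constant, then Proposition~\ref{mainpropp}(b) would produce (under the hypothesis that the $\delta_i^{p^j}$ are $F$-linearly independent) an element $a = \sum_{i,j} c_{ij} x_i^{p^j} - c$ in $Z(S)$ with $\delta_c = \sum_{i,j} c_{ij} \delta_i^{p^j}$, contradicting the assumed non-existence. Hence $Z(S)$ contains only constants, and by Proposition~\ref{mainpropp}(a) we get $Z(S) = F$, which is a field; Theorem~\ref{cornewtheorem} then gives simplicity of $S$.

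For the \emph{only if} direction, I assume $S$ is simple. Theorem~\ref{cornewtheorem} yields that $R$ is $\Delta$-simple and $Z(S)$ is a field. Suppose, for contradiction, that there are $c_{ij} \in F$, not all zero, with $\sum_{i,j} c_{ij}\,\delta_i^{p^j} = \delta_c$ for some $c \in R_\Delta$. I then set $p = \sum_{i,j} c_{ij}\, x_i^{p^j} - c$ and verify directly that $p \in Z(S)$: since the $c_{ij}$ and $c$ lie in $R_\Delta$ and commute and associate appropriately, the computation $pr = rp$ for $r \in R$ unwinds exactly as in Theorem~\ref{maintheorem0}, with the extra term $\sum_{i,j} c_{ij}\,\delta_i^{p^j}(r) - \delta_c(r) = 0$ vanishing by hypothesis; commutation with each $x^f$ is automatic. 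This exhibits a non-constant element of $Z(S)$, contradicting the fact (from Proposition~\ref{mainpropp}) that a field $Z(S)$ contains only constants.

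\textbf{The main obstacle} I anticipate is the appearance of the auxiliary hypothesis in Proposition~\ref{mainpropp}(b) that the maps $\delta_i^{p^j}$ be $F$-linearly independent. In the \emph{if} direction I invoke Proposition~\ref{mainpropp}(b) to describe a hypothetical non-constant central element, and that proposition is only stated under linear independence of the $\delta_i^{p^j}$; I must check that the argument still forces a contradiction when the $\delta_i^{p^j}$ are \emph{dependent}, since in that case a non-trivial $F$-linear relation $\sum c_{ij}\,\delta_i^{p^j} = 0$ is itself an inner derivation $\delta_0$ defined by $0 \in R_\Delta$, so the hypothesis of the theorem is already violated and simplicity fails for a different reason. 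Thus the clean dichotomy is: either the $\delta_i^{p^j}$ are dependent (and a trivial inner relation witnesses failure) or they are independent (and Proposition~\ref{mainpropp}(b) applies); reconciling these two cases so the statement reads uniformly is the delicate bookkeeping step, and I would take care to phrase ``non-trivial $F$-linear combination $\dots$ is an inner derivation defined by an element from $R_\Delta$'' to cover the zero combination as well.
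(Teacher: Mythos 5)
Your proposal matches the paper's proof essentially verbatim: both directions run through Theorem~\ref{cornewtheorem} and Proposition~\ref{mainpropp}, with the ``only if'' direction settled by exhibiting the non-constant central element $\sum_{i,j} c_{ij}\, x_i^{p^j} - c$ exactly as the paper does. Your additional observation---that the $F$-linear independence hypothesis in Proposition~\ref{mainpropp}(b) is automatic under the theorem's hypothesis, since a non-trivial dependence relation is itself an inner derivation defined by $0 \in R_{\Delta}$---is a point the paper leaves implicit, and it tightens the argument.
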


\begin{proof}
The ''if'' statement follows from Theorem~\ref{cornewtheorem}
and Proposition~\ref{mainpropp}.
Now we show the ''only if'' statement.
Suppose that $S$ is simple.
By Theorem~\ref{cornewtheorem}, $R$ is $\Delta$-simple
and $Z(S)$ is a field.
Seeking a contradiction,
suppose that there are $c_{ij}  \in F$,
not all of them equal to zero, and
$c \in R_0$
such that
$\sum_{ij} c_{ij} \delta_i^{p^j} = \delta_c$.
Consider now the element $a = \sum_{ij} c_i^{p^j} x_i^{p^j} - c$.
Then it is clear that $a x_i = x_i a$ for all $i \in I$.
Take $r \in R$. Then 
$a r - r a = rc +  c_1 \delta_1(r) + \ldots + c_n \delta_n(r) - cr = 
c_1 \delta_1(r) + \ldots + c_n \delta_n(r) + rc - cr =
c_1 \delta_1(r) + \ldots + c_n \delta_n(r) - \delta_c(r) = 0$.
Therefore $a \in Z(S)$. But, by Proposition~\ref{mainpropp}, this is a contradiction since $Z(S)$ is
a field and $a$ is non-constant.
\end{proof}

\end{document}